\DeclareMathOperator{\ass}{Ass}
\DeclareMathOperator{\Ass}{Ass}
\renewcommand{\epsilon}{\varepsilon}
\newcommand{\exterior}[2]{\bigwedge^{#1} #2}	
\DeclareMathOperator{\gr}{gr}
\DeclareMathOperator{\hgt}{ht}  			
\DeclareMathOperator{\init}{in}				
\newcommand{\iso}{\cong}
\newcommand{\kk}{k}					
\newcommand{\longto}{\longrightarrow}
\newcommand{\m}{\mathfrak{m}}
\DeclareMathOperator{\pd}{pd}  			
\renewcommand{\phi}{\varphi}
\newcommand{\QQ}{\mathbb{Q}}
\DeclareMathOperator{\reg}{reg}			
\DeclareMathOperator{\syz}{Syz} 			
\newcommand{\tensor}{\otimes}
\DeclareMathOperator{\Tor}{Tor}
\DeclareMathOperator{\unmixed}{unm}
\newcommand{\graph}[2]{
	\begin{tikzpicture}          
	\newcommand*\points{#1}     
	\newcommand*\edges{#2}          
	\newcommand*\scale{0.015}          
	\foreach \x/\y/\z/\w in \points {
		\draw[fill = black!50] (\scale*\x,\scale*\y) circle [radius = 0.1] node[label = {[label distance = 0.05 cm]\w: \z}] (\z) {}; 
	}
	\foreach \x/\y in \edges { \draw (\x) -- (\y); }      
	\end{tikzpicture}
}
\newtheorem{thm}{Theorem}[section]
\newtheorem{lemma}[thm]{Lemma}
\newtheorem{prop}[thm]{Proposition}
\newtheorem{cor}[thm]{Corollary}
\newtheorem*{main-thm}{Main Theorem}
\theoremstyle{definition}
\newtheorem{rmk}[thm]{Remark}
\newtheorem{question}[thm]{Question}
\newtheorem*{notation}{Notation}
\numberwithin{equation}{section}
\numberwithin{table}{section}
\title{Betti Numbers of Koszul Algebras Defined by Four Quadrics}
\date{}
\author{Paolo Mantero}
\address{Department of Mathematics, University of Arkansas, Fayetteville, AR 72701}
\email{pmantero@uark.edu}
\author{Matthew Mastroeni}
\address{Department of Mathematics, Oklahoma State University, Stillwater, OK 74078}
\email{mmastro@okstate.edu}
\keywords{Betti numbers, Koszul algebras, projective dimension}
\begin{document}
\maketitle

\begin{abstract}
Let $I$ be an ideal generated by quadrics in a standard graded polynomial ring $S$ over a field.  A question of Avramov, Conca, and Iyengar asks whether the Betti numbers of $R = S/I$ over $S$ can be bounded above by binomial coefficients on the minimal number of generators of $I$ if $R$ is Koszul.  This question has been answered affirmatively for Koszul algebras defined by three quadrics and Koszul almost complete intersections with any number of generators.  We give a strong affirmative answer to the above question in the case of four quadrics by completely determining the Betti tables of height two ideals of four quadrics defining Koszul algebras. 
\end{abstract}

\begin{spacing}{1.1}
\section{Introduction}

Let $\kk$ be a field, $S$ be a standard graded polynomial ring over $\kk$, $I \subseteq S$ be a graded ideal, and $R = S/I$.  We say that $R$ is a \emph{Koszul algebra}\index{Koszul!algebra} if $\kk \cong R/\m$ has a linear free resolution over $R$ where $\m = \bigoplus_{i > 0} R_i$.  Koszul algebras possess extraordinary homological properties; see the surveys \cite{Fröberg:Koszul:algebras:survey} and \cite{Koszul:algebras:and:their:syzygies}.  Moreover, examples of Koszul algebras appear throughout commutative algebra and algebraic geometry, including the coordinate rings of Grassmannians \cite{Kempf} and most canonical curves \cite{canonical:rings:of:curves}, many types of toric rings \cite{Hibi:rings} \cite{bipartite:edge:rings} \cite{strongly:Koszul:algebras}, as well as all suitably high Veronese subrings of any standard graded algebra \cite{high:Veronese:subrings:are:Koszul}.  However, the simplest examples of Koszul algebras are quotients by quadratic monomial ideals \cite{quadratic:monomial:ideals:are:Koszul}, and a recent guiding principle in the study of Koszul algebras has been that any reasonable property of algebras defined by quadratic monomial ideals should also hold for Koszul algebras; for example, see \cite{free:resolutions:over:Koszul:algebras}, \cite{Koszul:algebras:and:their:syzygies}, \cite{subadditivity:of:Betti:numbers}.  Among such properties, considering the Taylor resolution for an algebra defined by a quadratic monomial ideal leads to the following question about the Betti numbers of a Koszul algebra.

\begin{question}[{\cite[6.5]{free:resolutions:over:Koszul:algebras}}] \label{Betti:number:bound:for:Koszul:algebras}
If $R$ is Koszul and $I$ is minimally generated by $g$ quadrics, does the following inequality hold for all $i$?
\[ \beta^S_i(R) \leq \binom{g}{i} \]
In particular, is $\pd_S R \leq g$?
\end{question}

The above questions are known to have affirmative answers when $R$ is LG-quadratic (see next section).  Although LG-quadratic algebras form a proper subclass of all Koszul algebras, they encompass almost all known examples.  For general Koszul algebras, much less is known about the above question.  An affirmative answer was first given for Koszul algebras defined by $g \leq 3$ quadrics in \cite[4.5]{Koszul:algebras:defined:by:3:quadrics}.  This work was subsequently extended to Koszul almost complete intersections (where $\hgt I = g -1$) with any number of generators in \cite{Koszul:ACI's}.  Building on the successes of the preceding two papers, we show that Question \ref{Betti:number:bound:for:Koszul:algebras} has an affirmative answer when $R = S/I$ is a Koszul algebra defined by four quadrics.  

It is clear that the Betti number bound holds when $\hgt I = 4$ and when $\hgt I = 3$ by \cite{Koszul:ACI's}.  When $\hgt I = 1$, there is a linear form $z$ and a complete intersection of linear forms $J$ such that $I = zJ$ so that the minimal free resolution of $R$ is just Koszul complex on $J$, except that the first differential is multiplied by $z$. Hence, the Betti number bound also holds when $\hgt I = 1$, and it suffices to consider the case of height two ideals.  We prove:

\begin{main-thm}
Let $R = S/I$ be a Koszul algebra defined by four quadrics with $\hgt I = 2$.  Then the Betti table of $R$ over $S$ is one of the following:
\vspace{1 ex}
\begin{center}
\begin{minipage}{\textwidth}
\begin{multicols}{2}
\begin{enumerate}[label = \textnormal{(\roman*)}]
\item 
\textnormal{
\begin{tabular}{c|cccccccc}
  & 0 & 1 & 2 & 3  \\ 
\hline 
0 & 1 & -- & --  & --\\ 
1 & -- & 4 & 4  & 1 
\end{tabular}
}

\vspace{1 ex}

\item
\textnormal{
\begin{tabular}{c|cccccccc}
  & 0 & 1 & 2 & 3 & 4 \\ 
\hline 
0 & 1 & -- & --  & -- & -- \\ 
1 & -- & 4 & 3  & 1  & -- \\
2 & -- & -- & 3 & 3 & 1
\end{tabular}
}

\item
\textnormal{
\begin{tabular}{c|cccccccc}
  & 0 & 1 & 2 & 3  \\ 
\hline 
0 & 1 & -- & --  & -- \\ 
1 & -- & 4 & 3  & -- \\
2 & -- & -- & 1 & 1 
\end{tabular}
}

\vspace{1 ex}

\item
\textnormal{
\begin{tabular}{c|cccccccc}
  & 0 & 1 & 2 & 3 & 4 \\ 
\hline 
0 & 1 & -- & --  & -- & -- \\ 
1 & -- & 4 & 2  & --  & -- \\
2 & -- & -- & 4 & 4 & 1
\end{tabular}
}
\end{enumerate}
\end{multicols}
\end{minipage}
\end{center}
In particular, we have $\beta_i^S(R) \leq \binom{4}{i}$ for all $i$.
\end{main-thm}

In \cite{projective:dimension:of:4:quadrics}, Huneke-Mantero-McCullough-Seceleanu show that $\pd_S R \leq 6$ whenever $R$ is defined by an ideal generated by four quadrics.  Although their bound is sharp for general ideals of quadrics, a key point in the proof of our Main Theorem is that we can improve this bound to $\pd_S R \leq 4$ when $R$ is Koszul.  (In fact, Theorem \ref{projective:dimension:bound} shows this bound holds for a slightly larger class of algebras.)  With this result, we can then write down all possible Betti tables for $R$.  

\begin{notation} 
Throughout the remainder of the paper, the following notation will be in force unless specifically stated otherwise.  Let $\kk$ be a fixed algebraically closed ground field of arbitrary characteristic, $S$ be a standard graded polynomial ring over $\kk$,  $I \subseteq S$ be a proper graded ideal, and $R = S/I$.  Because Betti numbers are preserved under flat base extension, there is no loss of generality in assuming that the ground field is algebraically closed.  Recall that the ideal $I$ is called \emph{nondegenerate}\index{nondegenerate} if it does not contain any linear forms.  We can always reduce to a presentation for $R$ with $I$ nondegenerate by quotienting out a basis for the linear forms contained in $I$, and we will assume that this is the case throughout.  We denote the irrelevant ideal of $R$ by $\m = \bigoplus_{i > 0} R_i$.
\end{notation}

The division of the rest of the paper is as follows. In \S \ref{basic:properties:and:examples} after reviewing the relevant properties of Koszul algebras and their Betti tables, we show that every height two ideal generated by at least four independent quadrics has multiplicity at most two. In \S \ref{multiplicity:2:case:section} we refine arguments of Engheta on the unmixed parts of certain ideals to give a structure theorem for ideals generated by at least four independent quadrics over an algebraically closed field having height two and multiplicity two.  As a consequence, we see that every such ideal defines a Koszul algebra, and we can concentrate on the multiplicity one case to establish the projective dimension bound predicted by Question \ref{Betti:number:bound:for:Koszul:algebras}.  In \S \ref{projective:dimension:and:multiplicity:bounds:section}, we prove $\pd_S R \leq 4$ for Koszul algebras defined by height two ideals of four quadrics. From this, we deduce the possible Betti tables of such rings.  

Our ultimate goal is to prove a structure theorem for the defining ideals of Koszul algebras defined by four quadrics over an algebraically closed field.  We obtain results in this direction in \S \ref{multiplicity:2:case:section} when the multiplicity is two.  However, the case of height two ideals of multiplicity one is substantially more complex, and so, we have relegated that case to a separate paper \cite{structure:of:4:quadric:Koszul:algebras} to keep the current one at a manageable length.

\section{Background}
\label{basic:properties:and:examples}

\subsection{Koszul Algebras}

If $R$ is a Koszul algebra, it is well-known that its defining ideal $I$ must be generated by quadrics, but not every ideal generated by quadrics defines a Koszul algebra.  We have already noted in the introduction that every quadratic monomial ideal defines a Koszul algebra.  More generally, we say that $R$ or $I$ is \emph{G-quadratic}\index{G-quadratic} if, after a suitable linear change of coordinates $\phi: S \to S$, the ideal $\phi(I)$ has a Gr\"obner basis consisting of quadrics.  We also say that $R$ or $I$ is \emph{LG-quadratic} if $R$ is a quotient of a G-quadratic algebra $A$ by an $A$-sequence of linear forms.  Every G-quadratic algebra is Koszul by upper semicontinuity of the Betti numbers; see \cite[3.13]{upper:semicontinuity}.  It then follows from Proposition \ref{passing:Koszulness:to:and:from:quotients} below that every LG-quadratic algebra is also Koszul. In particular, every complete intersection generated by quadrics is LG-quadratic.  In summary, we have the following implications 
\begin{center}
\begin{tikzcd}
\text{G-quadratic} \rar[Rightarrow] & \text{LG-quadratic} \rar[Rightarrow] & \text{Koszul} \\
& \text{Quadratic CI} \uar[Rightarrow]
\end{tikzcd}
\end{center}
each of which is strict.  See \cite{Koszul:algebras:and:their:syzygies} for a more detailed discussion.

We will be specifically interested in the \emph{graded Betti numbers} of a Koszul algebra $R$, which are defined by $\beta_{i,j}^S(R) = \dim_\kk \Tor_i^S(R, \kk)_j$ and related to the usual Betti numbers by $\beta_i^S(R) = \sum_j \beta_{i,j}^S(R)$.  This information is usually organized into a table, called the Betti table of $R$, where the entry in column $i$ and row $j$ is $\beta_{i,i+j}^S(R)$ and zero entries are represented by ``$-$'' for readability.  See Table \ref{height:2:4-generated:edge:ideals} for examples.

Since Question \ref{Betti:number:bound:for:Koszul:algebras} is motivated by the case of quadratic monomial ideals, a natural starting point is to examine the Betti tables of such ideals.  By the well-known procedure of polarization, studying the Betti tables of all quadratic monomial ideals is equivalent to studying those of square-free quadratic monomial ideals, and such ideals can be studied combinatorially as they are in one-to-one correspondence with simple graphs.  

Given a simple graph $G$ with vertex set $ \{v_1, \dots, v_n\}$, the \emph{edge ideal} of $G$ is the square-free monomial ideal in $S = \kk[x_1, \dots, x_n]$ given by 
\[ I(G) = (x_ix_j \mid \{v_i, v_j\} \in E(G)). \]  
In particular, an edge ideal with $g$ generators corresponds to a graph with $g$ edges and has height two if and only if the minimal size of a vertex cover of $G$ is two.  The Betti tables of height two edge ideals are shown in Table \ref{height:2:4-generated:edge:ideals} below.  We will show that these are precisely the Betti tables of all Koszul algebras defined by height two ideals generated by four quadrics.  However, it should be noted that the analogy between general Koszul algebras and edge ideals is not perfect; see \cite[1.20]{Koszul:algebras:and:their:syzygies} for an example of a Koszul algebra defined by five quadrics whose $h$-polynomial cannot be realized by any monomial ideal.

\begin{table}[h!]
\begin{center}  
\begin{tabular}{|c|c|c|}
\hline
Case & $\beta^S(R)$ & Graphs
\\
\hline
(i)

&
\begin{minipage}{0.2\textwidth}
\vspace{0.2 cm}  \hspace{0.2 cm}
\begin{tabular}{c|cccccccc}
  & 0 & 1 & 2 & 3  \\ 
\hline 
0 & 1 & -- & --  & --\\ 
1 & -- & 4 & 4  & 1 
\end{tabular}
\hspace{0.2 cm}
\vspace{0.4 cm}
\end{minipage}

&
\scalebox{0.8}{
\begin{minipage}{0.25\textwidth}
\vspace{0.2 cm}
\graph{-40/40/1/left,-40/-40/2/left,40/-40/3/right,40/40/4/right}{1/2,2/3,3/4,4/1}
\vspace{0.4 cm}
\end{minipage}

\hspace{-0.75cm}
\begin{minipage}{0.25\textwidth}
\vspace{0.2 cm}
\graph{-40/40/1/left,-40/-40/2/left,20/0/3/above,80/0/4/above}{1/2,2/3,3/4,3/1}
\vspace{0.4 cm}
\end{minipage}

\hspace{-0.75cm}
\begin{minipage}{0.25\textwidth}
\vspace{0.3 cm}  
\graph{-40/40/1/left,-40/-40/2/left,20/0/3/above,80/0/4/above,140/0/5/above}{2/3,3/4,3/1,4/5}
\vspace{0.4 cm}
\end{minipage}
}

\\
\hline
(ii)

&
\begin{minipage}{0.3\textwidth}
\vspace{0.2 cm}  \hspace{0.4 cm}
\begin{tabular}{c|cccccccc}
  & 0 & 1 & 2 & 3 & 4 \\ 
\hline 
0 & 1 & -- & --  & -- & -- \\ 
1 & -- & 4 & 3  & 1  & -- \\
2 & -- & -- & 3 & 3 & 1
\end{tabular}
\hspace{0.2 cm}
\vspace{0.4 cm}
\end{minipage}

&
\scalebox{0.8}{
\begin{minipage}{0.3\textwidth}
\vspace{0.3 cm}  \hspace{0.1 cm}
\graph{-40/40/1/left,-40/-40/2/left,20/0/3/above,80/0/4/above,140/40/5/right,140/-40/6/right}{2/3,3/4,3/1,6/5}
\vspace{0.4 cm}
\end{minipage}
}

\\
\hline
(iii)

&
\begin{minipage}{0.2\textwidth}
\vspace{0.2 cm}  \hspace{0.2 cm}
\begin{tabular}{c|cccccccc}
  & 0 & 1 & 2 & 3  \\ 
\hline 
0 & 1 & -- & --  & -- \\ 
1 & -- & 4 & 3  & -- \\
2 & -- & -- & 1 & 1 
\end{tabular}
\hspace{0.2 cm}
\vspace{0.4 cm}
\end{minipage}

&
\scalebox{0.8}{
\begin{minipage}{0.3\textwidth}
\vspace{0.3 cm}  \hspace{0.1 cm}
\graph{-120/0/1/above,-60/0/2/above,0/0/3/above,60/0/4/above,120/0/5/above}{1/2,2/3,3/4,4/5}
\vspace{0.4 cm}
\end{minipage}
}

\\
\hline
(iv)

&
\begin{minipage}{0.3\textwidth}
\vspace{0.2 cm}  \hspace{0.2 cm}
\begin{tabular}{c|cccccccc}
  & 0 & 1 & 2 & 3 & 4 \\ 
\hline 
0 & 1 & -- & --  & -- & -- \\ 
1 & -- & 4 & 2  & --  & -- \\
2 & -- & -- & 4 & 4 & 1
\end{tabular}
\hspace{0.2 cm}
\vspace{0.4 cm}
\end{minipage}

&
\scalebox{0.8}{
\begin{minipage}{0.25\textwidth}
\vspace{0.2 cm}
\graph{-60/40/1/above,0/40/2/above,60/40/3/above,-60/-40/4/below,0/-40/5/below,60/-40/6/below}{1/2,2/3,4/5,5/6}
\vspace{0.4 cm}
\end{minipage}
}

\\
\hline

\end{tabular}
\end{center}
   \caption{Betti tables of height 2 edge ideals with 4 generators}
   \label{height:2:4-generated:edge:ideals}
\end{table}

It is also useful to know how the Koszul property can be passed to and from quotient rings.

\begin{prop}[{\cite[\S 3.1, 2]{Koszul:algebras:and:regularity}}] \label{passing:Koszulness:to:and:from:quotients}
Let $S$ be a standard graded $\kk$-algebra and $R$ be a quotient ring of $S$.
\begin{enumerate}[label = \textnormal{(\alph*)}]
\item If $S$ is Koszul and $\reg_S(R) \leq 1$, then $R$ is Koszul.
\item If $R$ is Koszul and $\reg_S(R)$ is finite, then $S$ is Koszul.
\end{enumerate}
\end{prop}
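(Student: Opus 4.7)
The plan is to analyze both parts via the Cartan--Eilenberg change-of-rings spectral sequence coming from the factorization $- \otimes_S \kk = (- \otimes_S R) \otimes_R \kk$:
\[
E^2_{p,q} = \Tor_p^R\bigl(\kk, \Tor_q^S(R, \kk)\bigr) \Longrightarrow \Tor_{p+q}^S(\kk, \kk).
\]
Since $\kk$ acts trivially on $\Tor_q^S(R, \kk)$ through $R \to \kk$, each $E^2$ term is a direct sum of copies of $\Tor_p^R(\kk, \kk)$ shifted by the internal degrees appearing in $\Tor_q^S(R, \kk)$, so keeping track of internal gradings is straightforward. Because each $E^\infty$ term is a subquotient of the corresponding $E^2$ term, the spectral sequence yields the standard inequality
\[
\reg_S(\kk) \leq \reg_R(\kk) + \reg_S(R).
\]

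For part (b), the assumption that $R$ is Koszul forces $\reg_R(\kk) = 0$, so the displayed inequality gives $\reg_S(\kk) \leq \reg_S(R) < \infty$. The theorem of Avramov and Peeva, which says that $\reg_S(\kk) < \infty$ already forces $\reg_S(\kk) = 0$ for a standard graded $\kk$-algebra, then completes the argument.

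For part (a), the strategy is to argue by strong induction on $n$ that $\Tor_n^R(\kk, \kk)_j = 0$ for all $j > n$; the cases $n \leq 1$ are immediate. For $n \geq 2$ and $j > n$, Koszulness of $S$ forces the abutment $\Tor_n^S(\kk, \kk)_j$ to vanish, so every $E^\infty_{p, n-p}$ in internal degree $j$ vanishes. No incoming differentials reach $E^r_{n, 0}$, since they would originate from $E^r_{n+r, 1-r}$ with $1 - r < 0$. For the outgoing differentials $d_r \colon E^r_{n, 0} \to E^r_{n-r, r-1}$, the plan is to use that the hypothesis $\reg_S(R) \leq 1$ makes $\Tor_{r-1}^S(R, \kk)$ concentrated in internal degrees $\leq r$, so $E^2_{n-r, r-1}$ in internal degree $j$ is built from terms $\Tor_{n-r}^R(\kk, \kk)_{j - a}$ with $a \leq r$. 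Since $j - a \geq j - r > n - r$, the inductive hypothesis makes each such term vanish, forcing $d_r = 0$. Therefore $E^\infty_{n, 0}$ agrees with $E^2_{n, 0} = \Tor_n^R(\kk, \kk)_j$ in this internal degree, and its vanishing delivers the conclusion. The main obstacle lies precisely in this internal-degree bookkeeping: the regularity bound on $R$ over $S$ and the inductive hypothesis on smaller $\Tor_p^R(\kk, \kk)$ must be combined correctly to kill every outgoing differential.
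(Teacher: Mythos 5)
Your proof is correct. The paper does not actually prove this proposition---it is quoted from Conca--De Negri--Rossi \cite[\S 3.1]{Koszul:algebras:and:regularity}---and your argument is essentially the standard one behind that citation: the change-of-rings spectral sequence $E^2_{p,q}=\Tor_p^R\bigl(\kk,\Tor_q^S(R,\kk)\bigr)\Rightarrow\Tor_{p+q}^S(\kk,\kk)$, with the induction killing the differentials out of $E^r_{n,0}$ for part (a), and the inequality $\reg_S(\kk)\leq\reg_R(\kk)+\reg_S(R)$ combined with the Avramov--Peeva finiteness theorem for part (b), which is indeed unavoidable in that direction and which you invoke correctly.
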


Here, the \emph{regularity} of $R$ over $S$ is defined by 
\begin{equation} \label{regularity}
\reg_S(R) = \max\{j \mid \beta_{i,i+j}^S(R) \neq 0 \; \text{for some}\; i \}.
\end{equation}
When $S$ is a standard graded polynomial ring, this definition agrees with the usual  Castelnuovo-Mumford regularity of $R$ and does not depend on the polynomial ring $S$, so we drop the subscript from the notation.  We will primarily use the above proposition to show that the Koszul property can be passed to or from a ring and its quotient by a regular sequence consisting of linear forms or quadrics.

\subsection{Betti Table Restrictions}

There are several restrictions on the shape of the Betti tables of Koszul algebras that make it possible to deduce the Betti tables of Koszul algebras defined by four quadrics.  The first of these restrictions, discovered in \cite{Backelin} and \cite[4]{Kempf}, says that the Betti tables of Koszul algebras have nonzero entries only on or above the diagonal; see \cite[2.10]{Koszul:algebras:and:their:syzygies} for an easier argument using regularity.

\begin{lemma} \label{Koszul:algebras:have:subdiagonal:Betti:table}
If $R = S/I$ is a Koszul algebra, then $\beta_{i,j}^S(R) = 0$ for all $i$ and $j > 2i$.
\end{lemma}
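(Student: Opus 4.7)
The plan is to induct on $i$, with the base case $i = 0$ being trivial since $R$ is a cyclic $S$-module, so $\beta_{0,j}^S(R) = 0$ for all $j > 0$. For the inductive step, the key tool is the Cartan--Eilenberg change-of-rings spectral sequence associated to the surjection $S \twoheadrightarrow R$:
\[ E^2_{p,q} = \Tor_p^R\bigl(k,\, \Tor_q^S(R,k)\bigr) \;\Longrightarrow\; \Tor_{p+q}^S(k,k). \]
Throughout, I track internal degrees, exploiting two facts. First, the Koszul complex is the minimal free $S$-resolution of $k$, so the abutment $\Tor_n^S(k,k)$ is concentrated in the single internal degree $n$. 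Second, since $R$ is Koszul, $\Tor_p^R(k,k)$ is concentrated in internal degree $p$; and because the $R$-action on $\Tor_q^S(R,k)$ factors through $R \to k$, a class in $\Tor_q^S(R,k)_j$ produces a summand of $E^2_{p,q}$ sitting in internal degree $p + j$.

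Suppose for contradiction that $\beta_{i,j}^S(R) \neq 0$ for some $j > 2i$. The corresponding class sits in $E^2_{0,i}$ in internal degree $j$. Since the abutment $\Tor_i^S(k,k)$ lives only in internal degree $i \neq j$, and $E^r_{0,i}$ has no outgoing differentials, the class must be killed at some page $r \geq 2$ by an incoming differential
\[ d_r : E^r_{r,\, i - r + 1} \longrightarrow E^r_{0,\, i}. \]
The source is a subquotient of $E^2_{r, i-r+1}$, whose internal-degree-$j$ piece is built from classes in $\Tor_{i-r+1}^S(R,k)_{j-r}$ via the shift noted above. Since $r \geq 2$ forces $i - r + 1 < i$, the inductive hypothesis applies and gives $j - r \leq 2(i - r + 1)$, i.e., $r \leq 2i - j + 2 < 2$, contradicting $r \geq 2$. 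This rules out the existence of $j > 2i$ with $\beta_{i,j}^S(R) \neq 0$ and completes the induction.

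The main obstacle I foresee is the careful setup of the change-of-rings spectral sequence as a spectral sequence of \emph{internally graded} objects, together with the verification that its differentials preserve internal degree and that it converges with a filtration compatible with that grading. Once those technicalities are in place, the argument reduces to the degree bookkeeping on the $E^2$-page above; no geometric input is needed beyond the Koszul property of $R$ and the fact that the polynomial ring $S$ has its residue field resolved by the Koszul complex.
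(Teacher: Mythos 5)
Your argument is correct. One point of comparison worth knowing: the paper does not actually prove this lemma at all --- it quotes the result from Backelin and Kempf and points to \cite[2.10]{Koszul:algebras:and:their:syzygies} for ``an easier argument using regularity'' --- so you have supplied a genuine proof where the paper gives only a citation. Your route, the change-of-rings spectral sequence $\Tor_p^R\bigl(\kk,\Tor_q^S(R,\kk)\bigr) \Rightarrow \Tor_{p+q}^S(\kk,\kk)$ with internal-degree bookkeeping and induction on homological degree, is essentially the classical argument behind the cited sources: the key inputs are exactly that the abutment is concentrated on the diagonal (Koszul complex over the polynomial ring), that the $R$-action on $\Tor_q^S(R,\kk)$ is trivial so $E^2_{p,q}$ splits into shifted copies of $\Tor_p^R(\kk,\kk)$, and that Koszulness concentrates the latter in degree $p$; from there the corner position $(0,i)$ forces any off-diagonal class to be hit by a $d_r$ with $r\ge 2$, and your inequality $r \le 2i-j+2 < 2$ closes the induction (the degenerate case $i-r+1<0$ kills the source outright, so it causes no trouble). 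The regularity-based argument referenced in the survey reaches the same conclusion faster by packaging this spectral-sequence comparison into standard inequalities between $\reg_R(\kk)=0$, $t_i^S(\kk)=i$, and the maximal shifts $t_i^S(R)$, so it hides the bookkeeping you carry out explicitly; what your version buys is self-containedness, at the cost of the (standard, and correctly flagged) verification that the spectral sequence is one of internally graded vector spaces with degree-preserving differentials and a compatible filtration on the abutment.
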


Additionally, the extremal portions of the Betti table of a Koszul algebra $R$, namely the diagonal entries and the linear strand of $I$, satisfy bounds similar to those in Question \ref{Betti:number:bound:for:Koszul:algebras}.

\begin{prop}[{\cite[3.4, 4.2]{Koszul:algebras:defined:by:3:quadrics}\cite[3.2]{free:resolutions:over:Koszul:algebras}}] \label{Koszul:Betti:table:constraints}
Suppose that $R = S/I$ is Koszul and that $I$ is minimally generated by $g$ elements.  Then:
\begin{enumerate}[label = \textnormal{(\alph*)}]
\item $\beta_{i,i+1}^S(R) \leq \binom{g}{i}$ for $2 \leq i \leq g$, and if equality holds for $i = 2$, then $I$ has height one and a linear resolution of length $g$.
\item $\beta_{i,2i}^S(R) \leq \binom{g}{i}$ for $2 \leq i \leq g$, and if equality holds for some $i$, then $I$ is a complete intersection.
\item $\beta_{i,2i}^S(R) = 0$ for $i > \hgt I$.
\end{enumerate}
\end{prop}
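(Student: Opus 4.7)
The three parts share a common engine: comparison of the minimal free resolution $F_\bullet$ of $R$ over $S$ with the Koszul complex $K_\bullet = K(f_1, \dots, f_g; S)$ on the minimal quadric generators of $I$. Here $K_i$ is free of rank $\binom{g}{i}$, concentrated in internal degree $2i$. Because $R$ is Koszul, Lemma~\ref{Koszul:algebras:have:subdiagonal:Betti:table} restricts the internal degrees in $F_i$ to the range $[i+1,\, 2i]$, which places the linear strand and the diagonal strand at opposite extremes of the Betti table; this concentration is what makes the comparison with $K_\bullet$ on the diagonal (and with its Koszul-dual complex on the linear strand) sharp.

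For (b), I would lift the identity on $R$ to a morphism $\phi_\bullet : K_\bullet \to F_\bullet$ and examine the induced map on $\Tor^S(R, k)$ at bidegree $(i, 2i)$. This map sends a $k$-vector space of dimension $\binom{g}{i}$ into one of dimension $\beta_{i,2i}^S(R)$, and I would establish surjectivity by induction on $i$: by Koszulness, any minimal syzygy of $F_\bullet$ at homological degree $i$ in internal degree $2i$ is a $k$-linear combination of wedge products of the quadric generators, modulo syzygies propagated from lower internal degree. This yields the bound in (b); the equality case forces $\phi_\bullet$ to be an isomorphism in every diagonal bidegree, and propagating through the differentials shows $K_\bullet$ itself is acyclic, so $(f_1, \dots, f_g)$ is a regular sequence.

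For (c), the plan is to exploit the fact that the top strand is controlled by the regular-sequence part of the generators. After a general linear change of the minimal generators, $f_1, \dots, f_{\hgt I}$ is a regular sequence and the sub-Koszul complex $K(f_1, \dots, f_{\hgt I}; S)$ is acyclic; any top-strand class at homological degree $i > \hgt I$ is then a top-strand cycle of the full Koszul complex that reduces — modulo this acyclic sub-complex and the rigidity imposed by Lemma~\ref{Koszul:algebras:have:subdiagonal:Betti:table} — to zero, giving $\beta_{i,2i}^S(R) = 0$ for $i > \hgt I$. For (a), the dual analysis applies to the linear strand $F_\bullet^{\mathrm{lin}}$, the subcomplex of $F_\bullet$ consisting of the linear parts of the differentials. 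When $R$ is Koszul, $F_\bullet^{\mathrm{lin}}$ assembles into a complex whose ranks Koszul duality identifies with graded components of a quotient of the exterior algebra on $g$ generators, giving $\beta_{i,i+1}^S(R) \leq \binom{g}{i}$. Equality for $i = 2$ means the linear syzygies exhaust the full exterior square of the quadratic generators; propagating up the linear strand forces a pure linear resolution of length $g$, and an ideal of quadrics admitting such a resolution must factor as $zJ$ for a linear form $z$ and a linear complete intersection $J$, forcing $\hgt I = 1$.

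The main obstacle I anticipate is justifying the inductive surjectivity step in (b) — that the image of the Koszul complex exhausts the top strand of $\Tor^S(R,k)$ at each homological position. Without the Koszul hypothesis, cancellation among higher syzygies could produce extra minimal generators of $F_i$ in degree $2i$ that are not hit by wedge products of the $f_j$'s; it is precisely Lemma~\ref{Koszul:algebras:have:subdiagonal:Betti:table}, and the resulting rigidity of the Tor algebra, that rule this out.
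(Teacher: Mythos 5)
First, a point of comparison: the paper itself gives no proof of this proposition --- it is quoted verbatim from \cite[3.4, 4.2]{Koszul:algebras:defined:by:3:quadrics} and \cite[3.2]{free:resolutions:over:Koszul:algebras} --- so there is no internal argument for your sketch to match; it must stand on its own. Judged that way, you have correctly identified the relevant objects (the comparison map from the Koszul complex on the $g$ generators into the minimal resolution, equivalently the multiplication map $\bigwedge^i \Tor_1^S(R,\kk)_2 \to \Tor_i^S(R,\kk)_{2i}$ on the diagonal strand), but every decisive step is asserted rather than proved. For (b), the surjectivity of that map onto the diagonal is exactly the content of the cited theorem of Avramov--Conca--Iyengar, and your justification (``by Koszulness, any minimal syzygy in degree $2i$ is a combination of wedge products of the generators modulo lower syzygies'') restates the claim instead of deriving it. Lemma \ref{Koszul:algebras:have:subdiagonal:Betti:table} is strictly weaker than Koszulness, and degree bookkeeping plus ``rigidity of the Tor algebra'' cannot produce this surjectivity; the known proofs use the multiplicative structure of $\Tor^S(R,\kk)$ together with the full Koszul hypothesis on $R$ (via $\Tor^R(\kk,\kk)$ and a change-of-rings comparison). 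You flag this yourself as the main obstacle, and it is: it is the theorem.

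The other parts have gaps of the same kind. In the equality case of (b), knowing the map $\bigwedge^i \kk^g \to \Tor_{i,2i}^S(R,\kk)$ is bijective in a single homological degree does not, as you claim, ``propagate through the differentials'' to show the Koszul complex is acyclic; the actual route is that the kernel of $\bigwedge \kk^g \to \bigoplus_i \Tor_{i,2i}$ is an ideal of the exterior algebra, so vanishing in degree $i$ forces vanishing in degree $2$, i.e.\ the Koszul syzygies are minimal, and one then argues separately that this makes $I$ a complete intersection. For (c), choosing a regular sequence of $\hgt I$ generic quadric combinations is fine over an infinite field, but ``reduces modulo this acyclic subcomplex and the rigidity of Lemma \ref{Koszul:algebras:have:subdiagonal:Betti:table} to zero'' is not an argument: nothing you wrote explains why a product of more than $\hgt I$ classes from $\Tor_{1,2}$ is a boundary. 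For (a), the proposed mechanism does not cohere: wedge products of the degree-two generator classes live on the \emph{diagonal} strand, not the linear strand, so ``a quotient of the exterior algebra on $g$ generators'' does not obviously bound $\beta_{i,i+1}^S(R)$, and the equality case is settled by an unproved structural claim ($I = zJ$ for a linear form $z$ and a linear complete intersection $J$) that is stronger than, and not a substitute for, the assertion $\hgt I = 1$. In short, the proposal is a reasonable map of the terrain, but each of (a), (b), (c) still rests entirely on the cited results.
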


\begin{cor} \label{nonACI:Koszul:algebras:have:2:linear:syzygies}
If $R = S/I$ is a Koszul algebra and $\hgt I \leq g-2$, then $\beta_{2,3}^S(R) \geq 2$.
\end{cor}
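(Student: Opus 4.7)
The plan is to prove the contrapositive: assuming $\beta_{2,3}^S(R) \leq 1$, I will deduce $\hgt I \geq g - 1$, which contradicts $\hgt I \leq g-2$. The main tool is the collection of Koszul syzygies $\kappa_{ij} := f_i e_j - f_j e_i$ for $1 \leq i < j \leq g$, where $f_1, \dots, f_g$ is a minimal quadratic generating set of $I$ and $e_1, \dots, e_g$ is the standard basis of $F_1 = S^g(-2)$ in the minimal free resolution $F_\bullet$ of $R$. These $\binom{g}{2}$ elements live in internal degree $4$ of $\ker(F_1 \to S)$, and since the $f_i$ are $\kk$-linearly independent in $S_2$, the $\kappa_{ij}$ are $\kk$-linearly independent in $(F_1)_4$. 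By minimality of the resolution, $\kappa_{ij}$ has trivial class in $\Tor_2^S(R,\kk)_4$ exactly when $\kappa_{ij} \in \m \cdot \ker(F_1 \to S)$, which in degree $4$ equals $S_1 \cdot (\ker(F_1 \to S))_3$ and has dimension at most $n \cdot \beta_{2,3}^S(R)$, where $n = \dim_\kk S_1$.

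\textbf{Case $\beta_{2,3}^S(R) = 0$:} Here the subspace of trivial Koszul syzygies in $\Tor_2^S(R,\kk)_4$ is zero, so all $\binom{g}{2}$ of them descend to independent nonzero classes. Hence $\beta_{2,4}^S(R) \geq \binom{g}{2}$, which by Proposition \ref{Koszul:Betti:table:constraints}(b) forces equality and $I$ to be a complete intersection, giving $\hgt I = g \geq g - 1$.

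\textbf{Case $\beta_{2,3}^S(R) = 1$:} Let $\sigma = \sum_i \ell_i e_i$ be the essentially unique linear syzygy, so $\sum_i \ell_i f_i = 0$. After relabeling, $\ell_1 \neq 0$, so $\ell_1 f_1 \in (f_2,\dots,f_g)$, and by minimality $f_1 \notin (f_2,\dots,f_g)$, so $\ell_1$ is a zero-divisor on $S/(f_2,\dots,f_g)$. The plan is to exploit this together with the Koszul property, passing through an appropriate quotient via Proposition \ref{passing:Koszulness:to:and:from:quotients}, to conclude that $\hgt(f_2,\dots,f_g) = g - 1$, hence $\hgt I \geq g - 1$.

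\textbf{Main obstacle:} Case 2 is the delicate one. The elementary rank inequality $\beta_{2,3}^S(R) + \beta_{2,4}^S(R) \geq g-1$ combined with $\beta_{2,4}^S(R) \leq \binom{g}{2} - 1$ (from non-CI) produces no contradiction, and the counting estimate $\beta_{2,4}^S(R) \geq \binom{g}{2} - n$ coming from a single linear syzygy is likewise too weak for fixed $n$. What is needed is a structural argument that leverages the exact shape of the single linear syzygy $\sigma$: one natural path is to choose a suitable change of coordinates so that $\sigma$ is supported on a small set of variables, then pass to a quotient by those variables or by $f_1$ to reduce to a Koszul algebra on fewer generators where the Case 1 argument (or an inductive version) applies.
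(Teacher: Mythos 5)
Your first case is fine: when $\beta_{2,3}^S(R)=0$ your argument that the Koszul syzygies $\kappa_{ij}$ are linearly independent modulo $\m\cdot\syz_1^S(I)$ (using that the degree-$3$ part of the syzygy module vanishes), hence $\beta_{2,4}^S(R)\geq\binom{g}{2}$, which by Proposition \ref{Koszul:Betti:table:constraints}(b) forces $I$ to be a complete intersection, is exactly the paper's first step, just written out in more detail.

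The case $\beta_{2,3}^S(R)=1$ is a genuine gap, and your own ``main obstacle'' paragraph concedes it: what you have there is a plan, not a proof. The paper disposes of this case by citing \cite[4.1]{Koszul:ACI's}, the theorem that a Koszul algebra defined by $g$ quadrics with exactly one linear syzygy satisfies $\hgt I = g-1$; that is a substantive structural result and is the actual content of the corollary in this case --- it does not follow from the observation that $\ell_1$ is a zerodivisor on $S/(f_2,\dots,f_g)$, nor from any dimension count of the kind you already (correctly) rule out. The specific reduction you sketch is also problematic: Proposition \ref{passing:Koszulness:to:and:from:quotients} only transfers Koszulness along quotients with $\reg\leq 1$ (in practice, by a regular sequence of linear forms or quadrics), so passing to $S$ modulo the variables supporting $\sigma$, or modulo $f_1$, need not preserve the Koszul hypothesis, and such quotients can create new linear syzygies, so neither the Case-1 argument nor an obvious induction applies afterwards. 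To make your write-up complete you should either quote the result from \cite{Koszul:ACI's} or supply an independent proof of it, which is considerably harder than the rest of the argument.
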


\begin{proof}
First, note that $\beta_{2,3}^S(R) \geq 1$.  Indeed, if this is not the case and $I$ is minimally generated by quadrics $q_1, \dots, q_g$, then the Koszul syzygies on the $q_i$ are all minimal generators of $\syz_1^S(I)$ so that $\beta_{2,4}^S(R) \geq \binom{g}{2}$, contradicting the preceding proposition since $I$ is not a complete intersection.  If $\beta_{2,3}^S(R) = 1$, it follows from \cite[4.1]{Koszul:ACI's} that $\hgt I = g -1$, which is also a contradiction.  Thus, we have $\beta_{2,3}^S(R) \geq 2$.
\end{proof}

\begin{prop}[\cite{high:linear:syzygies}] \label{high:linear:syzygy:implies:nonzero:socle}
Let $M$ be a finitely generated graded module over $S = \kk[x_1, \dots, x_r]$.  If $\beta^S_{r,r+j}(M) \neq 0$, then $(0:_M S_+)_j \neq 0$.
\end{prop}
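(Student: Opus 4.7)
The plan is to compute $\Tor^S_r(M,\kk)_{r+j}$ directly using the Koszul complex $K^\bullet$ on the variables $x_1,\dots,x_r$, which is a minimal graded free resolution of $\kk$ over $S$. Since $K^\bullet$ is concentrated in homological degrees $0,\dots,r$ with $K_r \cong S(-r)$ and $K_{r+1}=0$, the identification $\Tor^S_r(M,\kk) = H_r(M \otimes_S K^\bullet)$ reduces to computing the kernel of the top differential $d_r\colon M \otimes K_r \to M \otimes K_{r-1}$; there is no image to quotient by.

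Choosing a basis $e_1,\dots,e_r$ for the appropriate shifted free module, the single generator of $K_r$ is $e_1 \wedge \cdots \wedge e_r$ living in internal degree $r$, so every element of $(M \otimes K_r)_{r+j}$ can be written uniquely as $m \otimes (e_1 \wedge \cdots \wedge e_r)$ with $m \in M_j$. Applying $d_r$ yields
\[ d_r\bigl(m \otimes e_1 \wedge \cdots \wedge e_r\bigr) = \sum_{i=1}^r (-1)^{i+1}\, x_i m \otimes (e_1 \wedge \cdots \widehat{e_i} \wedge \cdots \wedge e_r), \]
which vanishes if and only if $x_i m = 0$ for every $i$, i.e., $m \in (0 :_M S_+)_j$. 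Therefore $\beta_{r,r+j}^S(M) = \dim_\kk (0 :_M S_+)_j$, so nonvanishing of the Betti number forces---and is in fact equivalent to---nonvanishing of the socle in internal degree $j$.

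There is no substantial obstacle here: the proposition is a direct computation once Tor is presented via the Koszul resolution. The only point requiring mild care is the bookkeeping of the internal grading, namely that $K_r \cong S(-r)$ is generated in internal degree $r$, so that tensoring with elements of $M_j$ produces classes in homological degree $r$ and internal degree $r+j$, matching the indexing of $\beta_{r,r+j}^S(M)$.
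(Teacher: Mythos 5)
Your proof is correct and is essentially the same as the paper's: both compute $\Tor_r^S(M,\kk)_{r+j}$ as the top homology of $M \otimes_S K_\bullet$ for the Koszul complex on the variables and observe that a cycle $m \otimes e_1 \wedge \cdots \wedge e_r$ forces $x_i m = 0$ for all $i$, hence $m \in (0:_M S_+)_j$. The only cosmetic difference is that you note $K_{r+1}=0$ and so record the full equality $\beta_{r,r+j}^S(M) = \dim_\kk (0:_M S_+)_j$, while the paper only extracts the implication it needs.
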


\begin{proof}
If $K_\bullet$ denotes the Koszul complex on the variables of $S$, then $H_r(K_\bullet \tensor_S M)_{r+j} = \Tor_r^S(\kk, M)_{r+j} \neq 0$ by assumption so that there is a nonzero cycle $e_{[r]} \tensor y \in \exterior{r}{S(-1)^r} \tensor_S M$ of degree $r + j$.  Here, $e_1, \dots, e_r$ denotes the standard basis of $S(-1)^r$, and for each subset $J = \{j_1, \dots, j_p\} \subseteq [r] = \{1, \dots, r\}$ with $j_1 < \cdots < j_p$, we set $e_J = e_{j_1} \wedge \cdots \wedge e_{j_p}$.  In particular, we note that $y$ must have degree $j$.  Since $e_{[r]} \tensor y$ is a cycle, we have $\sum_{i = 1}^r (-1)^{i+1} e_{[r] \setminus \{i\}} \tensor x_i y = 0$ so that $x_iy = 0$ in $M$ for all $i$, and hence $y \in (0:_M S_+)_j$.
\end{proof}

\begin{cor} \label{projective:dimension:level:linear:syzygy}
Let $R = S/I$ where $I$ is minimally generated by $g$ quadrics and $\pd_S R = r$.  If $\beta_{r,r+1}^S(R) \neq 0$, then $r \leq g$ with equality if and only if $\hgt I = 1$.
\end{cor}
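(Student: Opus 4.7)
The plan is to reduce to a polynomial ring in exactly $r$ variables, apply Proposition \ref{high:linear:syzygy:implies:nonzero:socle} to produce a nonzero linear form annihilating the irrelevant ideal of the quotient, and then extract from this both the bound $r \le g$ and the equality case.

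First, since $\kk$ is infinite, choose a maximal $R$-regular sequence of linear forms $\ell_1, \dots, \ell_t$ in $S$; by Auslander--Buchsbaum, $t = \depth R = \dim S - r$. Set $\bar S = S/(\ell_1, \dots, \ell_t)$ and $\bar R = R/(\ell_1, \dots, \ell_t)R = \bar S/\bar I$, where $\bar I$ denotes the image of $I$. Because the $\ell_i$ are $S$-regular (being part of a linear system of parameters) and $R$-regular, the minimal free resolution of $R$ over $S$ remains a minimal free resolution of $\bar R$ over $\bar S$ after tensoring with $\bar S$, so $\beta_{i,j}^S(R) = \beta_{i,j}^{\bar S}(\bar R)$ for all $i, j$. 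In particular $\bar S$ is a polynomial ring in $r$ variables, $\pd_{\bar S} \bar R = r$, $\bar I$ is nondegenerate and minimally generated by $g$ quadrics, $\hgt \bar I = \hgt I$, and $\beta_{r, r+1}^{\bar S}(\bar R) \ne 0$.

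Second, apply Proposition \ref{high:linear:syzygy:implies:nonzero:socle} with $M = \bar R$ and $j = 1$ to obtain a nonzero element of $(0 :_{\bar R} \bar S_+)_1$; lifting gives a linear form $y \in \bar S$ with $y \notin \bar I$ and $x_i y \in \bar I$ for each variable $x_i$ of $\bar S$. Because $\bar S$ is a domain and $y \ne 0$, the quadrics $x_1 y, \dots, x_r y$ are linearly independent in $\bar S_2$. Since $\bar I$ is nondegenerate and generated in degree $2$, one has $\dim_\kk \bar I_2 = g$, hence $r \le g$.

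Third, handle the equality case. If $r = g$, then $\{x_1 y, \dots, x_r y\}$ is a $\kk$-basis of $\bar I_2$, so $\bar I = (\bar I_2) \subseteq (y)$, which forces $\hgt I = \hgt \bar I = 1$. Conversely, if $\hgt I = 1$, then as recalled in the introduction we may write $I = zJ$ for a linear form $z$ and a complete intersection $J$ of $g$ linear forms, and the minimal free resolution of $R$ over $S$ is the Koszul complex on $J$ with first differential multiplied by $z$; this resolution has length $g$ and last Betti number $\beta_{g,g+1}^S(R) = 1 \ne 0$, so $r = g$.

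The main obstacle is the bookkeeping in the reduction step: one must verify that modding out an $R$-regular sequence of linear forms preserves graded Betti numbers, nondegeneracy, the number of minimal quadric generators, the height of the defining ideal, and the nonvanishing of $\beta_{r, r+1}$. Once that reduction is in place, the socle element produced by Proposition \ref{high:linear:syzygy:implies:nonzero:socle} combined with a dimension count in $\bar I_2$ yields both conclusions.
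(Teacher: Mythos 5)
Your proposal is correct and follows essentially the same route as the paper: reduce modulo a maximal regular sequence of linear forms so that $\dim \bar S = \pd_{\bar S}\bar R = r$, invoke Proposition \ref{high:linear:syzygy:implies:nonzero:socle} to get a linear form $y$ with $x_iy \in \bar I$ for all variables $x_i$, and conclude $r \le g$ from the linear independence of these quadrics, with equality exactly when they exhaust $\bar I_2$, i.e.\ when $\bar I = y\,\bar S_+$ has height one. Your treatment of the equality case (dimension count in $\bar I_2$ plus the explicit resolution of $zJ$ for the converse) is just a slightly more detailed spelling-out of the paper's terse equivalence, so there is nothing substantively different to flag.
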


\begin{proof}
By faithfully flat base change, we may assume that the ground field $\kk$ is infinite.  Then after replacing $R$ and $S$ with their quotients by a maximal regular sequence of linear forms on $R$, we may further assume that $\dim S = \pd_S R = r$ so that $S = \kk[x_1, \dots, x_r]$.  In particular, we note that this does not affect the height one condition since, for example, the height of $I$ is determined by the Betti numbers of $R$, which are unaffected by killing a regular sequence of linear forms.  In this setup, it follows from the preceding proposition that there is a linear form $\ell$ such that $x_i\ell \in I$ for all $i$.  Moreover, these elements are linearly independent quadrics, so they must be part of a minimal set of generators for $I$.  Hence, we have $r \leq g$, and furthermore, equality holds if and only if $I = \ell S_+$, which is equivalent to $\hgt I = 1$.
\end{proof}

\begin{prop}[{\cite[3.1]{minimal:homogeneous:linkage}}]
Suppose that $R = S/I$ is a quadratic Cohen-Macaulay ring. Then $\reg R \leq \pd_S R$, and equality holds if and only if $R$ is a complete intersection.
\end{prop}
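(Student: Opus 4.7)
The plan is to reduce to the Artinian case and then exploit the Gorenstein structure of a complete intersection of quadrics sitting inside $I$.

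First, since the ground field is algebraically closed (in particular infinite) and $R$ is Cohen--Macaulay, we may quotient $R$ by a maximal $R$-regular sequence of linear forms $\ell_1, \ldots, \ell_d$ (where $d = \dim R$) and also quotient $S$ by the same forms.  This replaces $R = S/I$ by an Artinian ring $\bar R = \bar S/\bar I$ whose defining ideal is still generated by quadrics, and preserves all graded Betti numbers, hence $\reg R$ and $\pd_S R$.  So we may assume that $R = S/I$ is Artinian with $\dim S = c := \pd_S R = \hgt I$.

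Next, since $\hgt I = c$ and the ground field is infinite, prime avoidance applied successively to the degree-two piece $I_2$ produces a regular sequence $q_1, \ldots, q_c \in I$ of quadrics.  Setting $J := (q_1, \ldots, q_c) \subseteq I$, the quotient $S/J$ is an Artinian Gorenstein complete intersection with Hilbert series $(1+t)^c$ and one-dimensional socle concentrated in degree $c$.  Since $R = S/I$ is a graded quotient of $S/J$, we get $R_j = 0$ for all $j > c$, and therefore $\reg R \leq c = \pd_S R$.

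For the equality case, suppose $\reg R = c$, so that $R_c \neq 0$ and hence $I_c \subsetneq S_c$.  Because $(S/J)_c$ is one-dimensional and $J_c \subseteq I_c$, the only possibility is $I_c = J_c$, i.e.\ $(I/J)_c = 0$.  But $I/J$ is an ideal of the Artinian Gorenstein ring $S/J$, and any nonzero ideal of such a ring must meet the one-dimensional socle $(S/J)_c$ nontrivially.  Hence $I/J = 0$, so $I = J$ and $R$ is a complete intersection.  The converse is immediate: when $R = S/I$ is a quadratic complete intersection, its Koszul resolution gives $\reg R = c$.

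The main technical steps are the Artinian reduction and the construction of the quadratic complete intersection $J \subseteq I$ via prime avoidance; the Gorenstein socle argument closing the equality case is standard once the setup is in place.
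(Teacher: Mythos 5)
Your proof is correct, and in fact it fills a gap the paper deliberately leaves open: the paper gives no argument for this proposition but simply cites it from \cite[3.1]{minimal:homogeneous:linkage}, so there is no internal proof to compare against. What you wrote is a self-contained proof of exactly the special case needed here, and it follows the standard route behind such bounds: Artinian reduction (legitimate since $R$ is Cohen--Macaulay and the field, being algebraically closed, is infinite, so the reduction preserves graded Betti numbers and makes $\dim \bar{S} = \hgt I = \pd_S R$ via Auslander--Buchsbaum); prime avoidance in the degree-two piece to produce a complete intersection $J$ of $c$ quadrics of height $c$ inside $I$ (valid because $I$ is generated by $I_2$, so $I_2 \not\subseteq P$ for any prime not containing $I$, and a vector space over an infinite field is not a finite union of proper subspaces); the bound $\reg \bar{R} \leq c$ from $\bar{R}$ being a quotient of the Gorenstein ring $\bar{S}/J$, which vanishes above degree $c$; and the equality case via the fact that every nonzero ideal of an Artinian Gorenstein ring contains its one-dimensional socle, which here sits in degree $c$. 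Two points you leave implicit are standard but worth stating: (i) for a finite-length graded module, regularity equals the top nonvanishing degree, which is what converts ``$\bar{R}_j = 0$ for $j > c$'' into $\reg \bar{R} \leq c$ and ``$\reg \bar{R} = c$'' into $\bar{R}_c \neq 0$; (ii) having shown $\bar{I} = J$ only in the Artinian reduction, you should transfer back to $R$ itself, e.g.\ by noting that the reduction preserves $\beta_1$, so $\mu(I) = \mu(\bar{I}) = c = \hgt I$ and hence $I$ is a complete intersection. Neither is a gap, just bookkeeping to make the write-up airtight.
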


Combining the above with preceding results, we have a similar statement for Koszul algebras; the first part was proved in a more general form in \cite[3.2]{free:resolutions:over:Koszul:algebras}.

\begin{cor} \label{regularity:bounded:by:projective:dimension}
If $R = S/I$ is Koszul, then $\reg R \leq \pd_S R$, and equality holds if and only if $R$ is a complete intersection.
\end{cor}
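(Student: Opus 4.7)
The plan is to exploit the subdiagonal shape of the Betti table forced by Lemma \ref{Koszul:algebras:have:subdiagonal:Betti:table} together with the diagonal vanishing in Proposition \ref{Koszul:Betti:table:constraints}(c), in order to reduce the equality case to the immediately preceding proposition about quadratic Cohen-Macaulay rings.

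First, for the inequality $\reg R \leq \pd_S R$: by Lemma \ref{Koszul:algebras:have:subdiagonal:Betti:table}, a nonzero graded Betti number $\beta_{i,i+j}^S(R)$ forces $j \leq i$. Since any such $i$ is at most $\pd_S R$, the definition of regularity in \eqref{regularity} immediately gives $\reg R \leq \pd_S R$. This is the statement already attributed to \cite{free:resolutions:over:Koszul:algebras}.

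Next, one direction of the equality characterization is routine: if $R$ is a complete intersection on $g$ quadrics, its minimal $S$-free resolution is the Koszul complex on those quadrics, which has length $g$ and satisfies $\beta_{g,2g}^S(R) = 1$, so $\reg R = g = \pd_S R$.

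For the converse, I would set $p = \pd_S R = \reg R$ and trace the subdiagonal argument backwards. Because every nonzero $\beta_{i,i+j}^S(R)$ satisfies $j \leq i \leq p$, the only way to achieve $j = p$ in \eqref{regularity} is to have $i = j = p$, i.e., $\beta_{p,2p}^S(R) \neq 0$. Applying Proposition \ref{Koszul:Betti:table:constraints}(c) to this nonvanishing diagonal entry forces $p \leq \hgt I$. Combined with the Auslander-Buchsbaum inequality $\pd_S R \geq \hgt I$, this yields $\pd_S R = \hgt I$, so $R$ is Cohen-Macaulay. The preceding proposition on quadratic Cohen-Macaulay rings then concludes that $R$ is a complete intersection. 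The whole argument is essentially a bookkeeping exercise over the results already established, so there is no serious obstacle; the only subtlety is recognizing that the extremal position $(p,2p)$ in the Betti table is the unique place compatible with both the subdiagonal shape and the equality $\reg R = \pd_S R$, which is precisely what activates the CM reduction.
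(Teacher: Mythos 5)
Your proposal is correct and follows essentially the same route as the paper: the inequality from the subdiagonal shape of the Betti table, the equality case forcing $\beta_{p,2p}^S(R)\neq 0$, Proposition \ref{Koszul:Betti:table:constraints}(c) giving $\pd_S R = \hgt I$ so that $R$ is Cohen--Macaulay, and then the quadratic Cohen--Macaulay proposition finishing the argument. The only cosmetic differences are that you verify the complete-intersection direction directly via the Koszul complex (the paper absorbs it into the preceding proposition) and you cite Auslander--Buchsbaum for $\pd_S R \geq \hgt I$, which is fine.
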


\begin{proof}
If $\reg R = j$, then $\beta_{i,i+j}^S(R) \neq 0$ for some $i$, and so, we must have $\reg R = j \leq i \leq \pd_S R$ by Lemma \ref{Koszul:algebras:have:subdiagonal:Betti:table}.  Furthermore, if $\reg R = \pd_S R$, we see that $i = j = \pd_S R$ so that $\beta_{i,2i}^S(R) \neq 0$.  It then follows from part (c) of Proposition \ref{Koszul:Betti:table:constraints} that $\hgt I = \pd_S R$ so that $R$ is Cohen-Macaulay.  The rest of the statement then follows from the preceding proposition.
\end{proof}

\subsection{Linkage and Multiplicity}

We recall a special case of a classical linkage result that we will employ several times.

\begin{thm}[{\cite[3.1(a)]{Poincaré:series:over:rings:of:small:embedding:codepth}}] \label{AKM}
An ideal $I \subseteq S$ is directly linked to a complete intersection $J = (f,g)$ via a complete intersection $C$ if and only if $I=I_2(M)$ where  
\[ M = \begin{pmatrix} g & a_1 & a_2 \\ -f & b_1 & b_2 \end{pmatrix} \]
and  $a_1, a_2, b_1, b_2 \in S$ such that $C = (a_1f+b_1g, a_2f+b_2g)$.
\end{thm}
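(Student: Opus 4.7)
The core of the theorem is the colon identity
\[
(c_1, c_2) : (f, g) \;=\; I_2(M),
\]
where $c_i := a_i f + b_i g$, and the three $2 \times 2$ minors of $M$ are $c_1$, $c_2$, and $\Delta := a_1 b_2 - a_2 b_1$. Once this identity is established, both directions of the theorem follow. For $(\Leftarrow)$, if $I = I_2(M)$ and $C = (c_1, c_2)$ is a height-two complete intersection inside $J = (f, g)$, then the identity yields $I = C : J$; symmetry of linkage in the Cohen--Macaulay setting then automatically supplies $J = C : I$, so $I$ and $J$ are directly linked via $C$. For $(\Rightarrow)$, a direct link means $C \subseteq J$ and $I = C:J$; since $J = (f, g)$, each generator of $C$ admits an expression $c_i = a_i f + b_i g$, and the identity yields $I = I_2(M)$ for the matrix $M$ built from these coefficients.

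The inclusion $I_2(M) \subseteq (c_1, c_2) : (f, g)$ is routine: $c_1, c_2 \in (c_1, c_2)$ trivially, while substituting $c_i = a_i f + b_i g$ shows
\[
\Delta f \;=\; b_2 c_1 - b_1 c_2, \qquad \Delta g \;=\; a_1 c_2 - a_2 c_1,
\]
both in $(c_1, c_2)$, so $\Delta \in (c_1, c_2) : (f, g)$. The reverse inclusion is the substantive step. The Eagon--Northcott complex
\[
0 \longrightarrow S^2 \xrightarrow{\ M^T\ } S^3 \xrightarrow{\ [\Delta,\,-c_2,\,c_1]\ } S
\]
is a free resolution of $S/I_2(M)$ because $\hgt I_2(M) \geq 2$, as $I_2(M)$ contains the height-two ideal $(c_1, c_2)$. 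On the other hand, classical linkage theory (Peskine--Szpiro) ensures $(c_1, c_2) : (f, g)$ is height-two Cohen--Macaulay, with a minimal free resolution obtained as the mapping cone of the Koszul-complex comparison $K_\bullet(c_1, c_2) \to K_\bullet(f, g)$ induced by the matrix $\left(\begin{smallmatrix} a_1 & a_2 \\ b_1 & b_2 \end{smallmatrix}\right)$; the connecting differential in this cone is, up to sign, multiplication by $\Delta$. The mapping-cone resolution matches the Eagon--Northcott resolution of $S/I_2(M)$ term by term, forcing $(c_1, c_2) : (f, g) = I_2(M)$.

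I expect the main obstacle to be this reverse inclusion. The forward inclusion is a one-line algebraic manipulation, but identifying $\Delta$ as exactly the ``extra'' generator of the colon beyond $C$ is the structural content of linkage for height-two perfect ideals. Without appealing to the mapping-cone (or equivalently, Hilbert--Burch / Eagon--Northcott) machinery, one has no a priori reason that $(c_1, c_2):(f, g)$ should be three-generated at all, let alone generated by the three specified minors of $M$.
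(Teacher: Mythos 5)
The paper offers no proof of this statement to compare against: it is quoted verbatim from Avramov--Kustin--Miller [AKM88, 3.1(a)] and used as a black box. Judged on its own, your argument is the standard linkage/Northcott-ideal proof, and its skeleton is sound: reduce both implications to the colon identity $(c_1,c_2):(f,g)=I_2(M)=(c_1,c_2,\Delta)$ for a regular sequence $c_1,c_2$ inside the complete intersection $(f,g)$, note $C\subseteq I\cap J$, and use that $S/(f,g)$ is Cohen--Macaulay to get $C:(C:J)=J$ in the backward direction. The forward inclusion ($\Delta f,\Delta g\in C$) is a correct computation, and acyclicity of the length-two Eagon--Northcott/Hilbert--Burch complex on $M$ does follow from $\hgt I_2(M)\geq 2$, as you say.

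Two points in your reverse inclusion need tightening, and they sit exactly at the step you flagged as substantive. First, the complex that resolves $S/(C:J)$ is not the mapping cone of the comparison map $\alpha\colon K_\bullet(c_1,c_2)\to K_\bullet(f,g)$ (that cone is, up to shift, a resolution of $J/C$), but the dual of that cone, suitably shifted; self-duality of Koszul complexes makes this harmless, but as written the assertion is off. Second, and more importantly, ``the two resolutions match term by term'' does not by itself force equality of ideals: distinct ideals can have free resolutions of the same shape. What the dual-cone theorem actually provides is a generating set, namely that $C:J$ is generated by $c_1,c_2$ together with the entry of the dualized top comparison map $\alpha_2^{*}$, which is multiplication by $\Delta=\det A$; this gives $C:J=(c_1,c_2,\Delta)=I_2(M)$ outright. (Alternatively, in the graded setting you could finish from your established containment $I_2(M)\subseteq C:J$ by comparing Hilbert series computed from the two length-two resolutions, since containment plus equal Hilbert function yields equality.) With either repair your proof is complete and is, in substance, the argument underlying the cited result.
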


Recall that a \emph{linear prime} in $S$ is a prime ideal generated by linear forms.  In the next theorem, we need the following result attributed to P.~Samuel.  This result follows easily from the local case \cite[40.6]{Nagata} after replacing $R$ with $R_{\m}$ since $R_\m$ being regular forces $\gr_{\m R_\m}(R_\m) \iso R$ to be a polynomial ring \cite[1.1.8]{Bruns:Herzog}.

\begin{prop}\label{linear:primes}
If $I \subseteq S$ is an unmixed ideal such that $R = S/I$ has $e(R) = 1$, then $I$ is a linear prime.
\end{prop}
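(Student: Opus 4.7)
The plan is to reduce to the local case and invoke the classical result of Nagata suggested just before the statement. First, I would localize at the graded maximal ideal $\m = \bigoplus_{i > 0} R_i$. Since $I$ is graded, all its associated primes are contained in $\m$ and survive localization, so the unmixedness of $I$ passes to $I R_\m$. For a standard graded $\kk$-algebra, the graded multiplicity agrees with the local multiplicity at $\m$, hence $e(R_\m) = e(R) = 1$.

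Next, I would apply \cite[40.6]{Nagata}: an unmixed Noetherian local ring of multiplicity one is regular, so $R_\m$ is a regular local ring. (The unmixedness hypothesis required of the completion $\widehat{R_\m}$ is automatic, since localizations of finitely generated $\kk$-algebras are excellent.) By \cite[1.1.8]{Bruns:Herzog}, the associated graded ring $\gr_{\m R_\m}(R_\m)$ is then a polynomial ring over $\kk$ in $\dim R$ variables. Because $R$ is standard graded, the natural map $R_n \to \m^n/\m^{n+1} \to \m^n R_\m / \m^{n+1} R_\m$ is an isomorphism in each degree, yielding a graded isomorphism $R \iso \gr_{\m R_\m}(R_\m)$. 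Thus $R$ itself is a polynomial ring generated in degree one.

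To finish, let $W = \ker(S_1 \twoheadrightarrow R_1)$, so that $(W) \subseteq I$ and $S/(W)$ is a polynomial ring of dimension $\dim R$. The induced surjection $S/(W) \twoheadrightarrow R$ is an isomorphism in degree one between two standard graded domains of the same dimension, hence an isomorphism, and therefore $I = (W)$ is a linear prime. The only potentially delicate point in the argument is the transfer of unmixedness through localization and completion, but this is routine for graded quotients of a polynomial ring over a field.
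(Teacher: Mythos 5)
Your proof is correct and follows essentially the same route as the paper: localize at $\m$, apply Nagata's theorem \cite[40.6]{Nagata} to conclude $R_\m$ is regular, identify $R$ with $\gr_{\m R_\m}(R_\m)$ via \cite[1.1.8]{Bruns:Herzog}, and conclude $I$ is generated by the linear forms in its degree-one part and is prime. Your extra remarks (transfer of unmixedness to the completion via excellence, and the final Hilbert-function/dimension comparison showing $I=(W)$) simply make explicit details the paper leaves to the reader.
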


We conclude this section by proving that for any ideal $I$ of height 2 generated by $g\geq 4$ quadrics, one may only have $e(R)\leq 2$.  Recall that the \emph{unmixed part} $I^{\unmixed}$ of $I$ is the intersection of all primary components $J$ of $I$ with $\hgt J = \hgt I$.

\begin{thm} \label{multiplicity:at:most:2}
Suppose that $R = S/I$ where $\hgt I = 2$ and $I$ is generated by quadrics.  Then $e(R) \leq 3$ with equality if and only if $I^{\unmixed} = I_2(M)$ for some $3 \times 2$ matrix $M$ of linear forms. Hence, $e(R) \leq 2$ if $I$ is minimally generated by $g \geq 4$ quadrics.
\end{thm}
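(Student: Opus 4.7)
The plan is to apply linkage to $J := I^{\unmixed}$ via a complete intersection of two quadrics drawn from $I$, use the additivity of multiplicity under linkage, and then invoke Proposition \ref{linear:primes} and Theorem \ref{AKM} to extract the structural conclusion. Since $e(R) = e(S/J)$, it suffices to control $e(S/J)$, where $J$ is unmixed of codimension $2$ with $J_2 \supseteq I_2$.

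I would first produce a regular sequence of two quadrics inside $I$: pick any nonzero $f_1 \in I_2$, and then use graded prime avoidance (available because the ground field is infinite) to choose $f_2 \in I_2$ avoiding every minimal prime of $(f_1)$; this is possible because those primes all have height $1$ while $\hgt I = 2$. Let $C := (f_1, f_2) \subseteq J$, a Gorenstein complete intersection of codimension $2$. Standard linkage theory then gives that $K := C : J$ is either $S$ or unmixed of codimension $2$, that $J = C : K$, and that
\[
e(S/C) \;=\; e(S/J) + e(S/K).
\]
Since $e(S/C) = 4$, we obtain $e(S/J) \leq 4$, with equality precisely when $K = S$, i.e., $J = C$. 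But $J = C$ would force $\dim_k I_2 \leq \dim_k C_2 = 2$, making $I$ itself a CI of two quadrics; excluding this degenerate case (implicit in the statement, and automatic whenever $I$ has at least three minimal quadric generators), we conclude $e(R) \leq 3$.

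For the equality case $e(R) = 3$, we have $e(S/K) = 1$, and since $K$ is unmixed of codimension $2$, Proposition \ref{linear:primes} forces $K = (\ell_1, \ell_2)$ for linear forms $\ell_1, \ell_2$. Applying Theorem \ref{AKM} to the direct link $J = C : K$ yields $J = I_2(N)$ with
\[
N \;=\; \begin{pmatrix} \ell_2 & a_1 & a_2 \\ -\ell_1 & b_1 & b_2 \end{pmatrix}
\qquad \text{and} \qquad
C \;=\; (a_1\ell_1 + b_1\ell_2,\; a_2\ell_1 + b_2\ell_2).
\]
Because $C$ is generated by quadrics while $\ell_1, \ell_2$ are linear, each $a_i, b_i$ is necessarily a linear form; transposing $N$ then produces the desired $3 \times 2$ matrix $M$ of linear forms with $J = I_2(M)$. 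Conversely, for any such $M$, the Hilbert--Burch resolution $0 \to S(-3)^2 \to S(-2)^3 \to S \to S/J \to 0$ has Hilbert-series numerator $(1-t)^2(1+2t)$, giving $e(S/J) = 3$.

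Finally, the ``hence'' clause: when $I$ has $g \geq 4$ minimal quadric generators, $\dim_k I_2 = g \geq 4$. On the other hand, whenever $M$ is a $3 \times 2$ matrix of linear forms with $\hgt I_2(M) = 2$, its three $2 \times 2$ minors are linearly independent, so $\dim_k I_2(M)_2 = 3$. The inclusion $I_2 \subseteq J_2$ therefore rules out $J = I_2(M)$, forcing $e(R) < 3$ and hence $e(R) \leq 2$. The main technical care lies in setting up linkage at the level of $J$ rather than $I$ (so that the unmixedness hypotheses of the multiplicity additivity formula and of Proposition \ref{linear:primes} are satisfied); once this is in place, the argument is a clean cascade through the cited results.
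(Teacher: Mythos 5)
Your argument is correct, and for the core of the theorem it travels the same road as the paper: link through a complete intersection $C$ of two quadrics inside $I$, observe that the residual ideal has multiplicity one, apply Proposition \ref{linear:primes} to conclude it is a height-two linear prime $(\ell_1,\ell_2)$, and then use Theorem \ref{AKM} to get $I^{\unmixed} = I_2(M)$ for a $3\times 2$ matrix $M$ of linear forms, with the converse via Hilbert--Burch and the final clause via the count $\dim_\kk I_2(M)_2 = 3 < 4 \leq \dim_\kk I_2$. (Linking from $I^{\unmixed}$ rather than from $I$ is not a real difference, since $C:I = C:I^{\unmixed}$ because $C$ is unmixed.) Where you genuinely diverge is the inequality $e(R)\leq 3$ itself: the paper simply cites \cite[3.1]{projective:dimension:of:height:2:ideals:of:quadrics}, whereas you rederive it from the same linkage setup via the additivity $e(S/C) = e(S/I^{\unmixed}) + e(S/K)$ together with the fact that $K \neq S$ forces $K$ to be unmixed of height two, hence $e(S/K)\geq 1$. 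This buys a self-contained proof and also surfaces an edge case that the bare statement glosses over: if $I$ is a complete intersection of two quadrics (equivalently $I^{\unmixed} = C$, $K = S$), then $e(R) = 4$, so the bound $e(R)\leq 3$ genuinely requires excluding that case --- which is exactly the hypothesis carried by the cited result and is automatic in the paper's application with $g\geq 4$; your explicit exclusion is the right move. Two small points, at the same level of detail as the paper itself: the additivity of multiplicity should be justified by the length computation at the height-two minimal primes of $C$ (Gorenstein duality over $(S/C)_P$, as in \cite[21.23]{Eisenbud}), and in applying Theorem \ref{AKM} one should note that the coefficients $a_i, b_i$ can be chosen homogeneous of degree one (replace them by their degree-one components, which does not change the quadric generators of $C$), which is what makes $M$ a matrix of linear forms --- a point the paper also leaves implicit.
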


\begin{proof} 
Recall that the ground field $\kk$ is algebraically closed by assumption.  We already know by \cite[3.1]{projective:dimension:of:height:2:ideals:of:quadrics} that $e(R) \leq 3$. Assume that $e(R)=3$.  Let $C$ be a complete intersection of quadrics in $I$, and set $J = (C : I)$. By linkage, $J$ is a homogeneous unmixed ideal directly linked to $I^{\unmixed}$ with $e(S/J)=e(S/C)-e(R) = 1$ (see for example \cite[21.23]{Eisenbud}). It follows by  Proposition \ref{linear:primes} that $J$ is a linear prime of height 2, and thus, $J$ is a complete intersection. Consequently, Theorem \ref{AKM} implies that $I^{\unmixed} = I_2(M)$ for some $3 \times 2$ matrix $M$ of linear forms.  Conversely, if $I^{\unmixed} = I_2(M)$ for some $3 \times 2$ matrix of linear forms, then $\hgt I_2(M) = 2$ so that it is easily computed that $e(R) = e(S/I^{\unmixed}) = 3$ from the Hilbert-Burch resolution \cite[1.4.17]{Bruns:Herzog}.  In particular, $I_2(M)$ is an ideal generated by 3 quadrics, so it is impossible for $I$ to contain four linearly independent quadrics. Thus, we must have $e(R)\leq 2$ if $I$ is minimally generated by $g \geq 4$ quadrics. 
\end{proof}

\section{The Multiplicity 2 Case}
\label{multiplicity:2:case:section}

The case of Koszul algebras defined by height two ideals $I$ of multiplicity two generated by four quadrics is particularly simple to analyze thanks to a result of Engheta on the unmixed part of such ideals. 

\begin{thm} \label{multiplicity:2:case}
Let $R = S/I$ be a ring defined by $g \geq 4$ quadrics.  Then $\hgt I = e(R) = 2$ if and only if $I$ has one of the following forms:
\begin{enumerate}[label = \textnormal{(\roman*)}]
\item[\textnormal{(i$_\text{a}$)}] $(x, y) \cap (z, w)$ or $\left((x, y)^2, xz + yw\right)$ for independent linear forms $x$, $y$, $z$ and $w$, in which case we must have $g = 4$.
\item[\textnormal{(i$_\text{b}$)}] $(a_1x, \dots, a_{g-1}x, q)$ for independent linear forms $a_1, \dots, a_{g-1}$ and some linear form $x$ and quadric $q \in (a_1, \dots, a_{g-1}) \setminus (x)$.
\stepcounter{enumi}
\item $(a_1x, \dots, a_{g-1}x, q)$ for independent linear forms $a_1, \dots, a_{g-1}$ and some linear form $x$ and quadric $q$ which is a nonzerodivisor modulo $(a_1x, \dots, a_{g-1}x)$.
\end{enumerate}
\end{thm}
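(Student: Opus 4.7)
The proof proceeds in two directions.

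For sufficiency, I would verify by direct computation that each of the four listed forms satisfies $\hgt I = 2$ and $e(R) = 2$. For the unmixed forms in (i$_\text{a}$), the ideal $(x,y) \cap (z,w)$ with $x, y, z, w$ independent has two distinct linear minimal primes of height $2$, each contributing multiplicity $1$ by the associativity formula. The ideal $((x,y)^2, xz+yw)$ is $(x,y)$-primary: localizing at $(x,y)$ makes $z, w$ units, so $xz + yw$ becomes a regular parameter in $S_{(x,y)}$ and the localized ideal reduces to $(u, v^2)$ for suitable regular parameters, giving colength $2$. For (i$_\text{b}$) and (ii), modding out by the linear form $x$ reduces $I$ to the principal ideal $(\bar q)$ in $S/(x)$, and $\bar q \neq 0$ in both cases, so $S/(I, x) = S/(x, q)$ has multiplicity $2$; a short argument identifies $I^{\unmixed} = (x, q)$ (a prime if $q$ is irreducible modulo $x$, otherwise an intersection of two linear primes), giving $e(R) = 2$.

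For necessity, I would invoke Engheta's structure theorem on height $2$ unmixed ideals of multiplicity $2$ to classify $I^{\unmixed}$. Up to a linear change of coordinates, this yields three possibilities: (A) $I^{\unmixed} = P_1 \cap P_2$ for distinct linear primes of height $2$; (B) $I^{\unmixed}$ is primary to a linear prime of height $2$ with colength $2$ in the localization; or (C) $I^{\unmixed}$ is itself a prime $(x, q)$ with $x$ linear and $q$ a quadric irreducible modulo $x$. The cases then split based on whether $I^{\unmixed}$ contains a linear form.

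If $I^{\unmixed}$ contains no linear form, the possibilities reduce (after a suitable change of variables) to $(xz, xw, yz, yw)$ with $x, y, z, w$ independent from case (A), or to the primary ideals $(x,y)^2$ or $((x,y)^2, xz+yw)$ from case (B). The form $(x,y)^2$ is excluded because its quadric part has dimension $3 < g$, while in the other two $\dim_{\kk}(I^{\unmixed})_2 = 4$ and $I \subseteq I^{\unmixed}$ with $g \geq 4$ forces $I = I^{\unmixed}$ and $g = 4$, yielding form (i$_\text{a}$). If instead $I^{\unmixed}$ contains a linear form $x$, then after simplification $I^{\unmixed} = (x, q)$ with $q$ a quadric and $q \notin (x)$ (this covers the non-transverse case of (A), the colength-$2$ primary ideals with a linear generator in (B), and all of (C)). Since $I$ is nondegenerate, $I \subsetneq I^{\unmixed}$. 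Every quadric generator of $I$ lies in the subspace $(I^{\unmixed})_2 = x \cdot S_1 \oplus \kk q$, so row-reducing I obtain a minimal generating set of the form $(a_1 x, \dots, a_{g-1} x, q')$ with $a_1, \dots, a_{g-1}$ independent linear forms and $q'$ having nonzero coefficient of $q$ (otherwise $I \subseteq (x)$ would have height $1$). Absorbing the $x \cdot h$ piece and rescaling, I obtain $I = (a_1 x, \dots, a_{g-1} x, q)$ with $q \notin (x)$. Whether $q \in (a_1, \dots, a_{g-1})$ or not gives the final dichotomy: in the first case, one matches (i$_\text{b}$); in the second, $q$ avoids both associated primes $(x)$ and $(a_1, \dots, a_{g-1})$ of $x(a_1, \dots, a_{g-1})$, so $q$ is a nonzerodivisor modulo $(a_1 x, \dots, a_{g-1} x)$, matching (ii).

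The main obstacle I expect is extracting sufficient detail from Engheta's classification to enumerate all the forms of $I^{\unmixed}$ above, especially for the primary ideals in case (B), and reducing those that have a linear generator to the uniform shape $(x, q)$ used in the second sub-case. A subtler point is verifying that the row reduction preserves the minimal generator count $g$ and that the resulting dichotomy between (i$_\text{b}$) and (ii) genuinely exhausts the remaining possibilities.
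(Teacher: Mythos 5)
Your proposal is correct and follows essentially the same route as the paper: classify $I^{\unmixed}$ via Engheta's theorem on unmixed height-two multiplicity-two ideals, force $I = I^{\unmixed}$ in the non-degenerate cases by counting independent quadrics, and in the case $I^{\unmixed} = (x,q)$ normalize the generators to $(a_1x,\dots,a_{g-1}x,q)$ and split according to whether $q$ lies in an associated prime of $(a_1x,\dots,a_{g-1}x)$. Two cosmetic points: $(x,y)^2$ has multiplicity $3$, so it does not actually occur in Engheta's multiplicity-two list (your dimension count excludes it anyway), and the claim that $(x)$ and $(a_1,\dots,a_{g-1})$ are the only associated primes of $(a_1x,\dots,a_{g-1}x)$ merits the short verification the paper gives (separating the cases $x \notin (a_1,\dots,a_{g-1})$ and $x \in (a_1,\dots,a_{g-1})$), but neither affects the argument.
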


\begin{proof}
If $I$ has one of the forms listed above, it easily follows that $\hgt I = e(R) = 2$ by considering the minimal primes of the given ideals or by computing their Hilbert series from the Betti tables in Corolllary \ref{multiplicity:2:case:Betti:tables} below, so we prove only that every ideal $I$ with $\hgt I = e(R) = 2$ has one of these forms.   Suppose that $\hgt I = e(R) = 2$.  Since $I^{\unmixed}$ is a height two ideal with $e(S/I^{\unmixed}) = e(R) = 2$, \cite[Prop 11]{Engheta} yields that either: (1) $I^{\unmixed} = (x, y) \cap (z, w) = (xz, xw, yz, yw)$ for some independent linear forms $x$, $y$, $z$, and $w$, (2) $I^{\unmixed} = \left((x, y)^2,xz+yw\right)$ for some linear forms $x$ and $y$ and some forms $z$ and $w$ of degree $d > 0$ such that $(x, y, z, w)$ is a complete intersection, or (3) $I^{\unmixed} = (x, q)$ for some linear form $x$ and quadric $q$.

\textsc{Case (1):}  If $I^{\unmixed} = (xz, xw, yz, yw)$ for independent linear forms $x$, $y$, $z$, and $w$, then since $I \subseteq I^{\unmixed}$ contains four independent quadrics, we must have $g = 4$ and $I = I^{\unmixed}$.

\textsc{Case (2):}  If $I^{\unmixed} = \left((x, y)^2,xz+yw\right)$ for some linear forms $x$ and $y$ and some forms $z$ and $w$ of degree $d > 0$ such that $(x, y, z, w)$ is a complete intersection, then since $I \subseteq I^{\unmixed}$ is generated by four independent quadrics, we see that $z$ and $w$ must be linear forms and that $I = I^{\unmixed}$.

\textsc{Case (3):}  Assume that $I^{\unmixed} = (x, q)$ for some linear form $x$ and quadric $q$.  Then $I = (a_1x + c_1q, \dots, a_gx + c_gq)$ for some $c_i \in \kk$ and linear forms $a_i$.  If $c_i = 0$ for all $i$, then $\hgt I = 1$, which is a contradiction.  So, we may assume without loss of generality that $c_g = 1$ and, after replacing $q$ with $a_gx + q$ and subtracting combinations of the generators of $I$, that $I$ has the form $I = (a_1x, \dots, a_{g-1}x, q)$.  In particular, $a_1, \dots, a_{g-1}$ must be independent linear forms as $I$ is generated by $g$ independent quadrics.

Next, we note that the only associated primes of $(a_1x, \dots, a_{g-1}x)$ are $(x)$ and $(a_1, \dots, a_{g-1})$.  This is clear by considering the primary decomposition of an ideal of this form.  Indeed, if $a_1, \dots, a_g, x$ are independent linear forms, then we have $(a_1x, \dots, a_{g-1}x) = (x) \cap (a_1, \dots, a_{g-1})$.  Otherwise, we must have $x \in (a_1, \dots, a_{g-1})$ so that, after rescaling $x$ and possibly interchanging the $a_i$, we may assume that $x = c_1a_1 + \cdots + c_{g-2}a_{g-2} + a_{g-1}$ for some $c_i \in \kk$.  Hence, $a_1, \dots, a_{g-2}, x$ are independent linear forms, and $(a_1x, \dots, a_{g-1}x) = (a_1x, \dots, a_{g-2}x, x^2) = (x) \cap (a_1, \dots, a_{g-2}, x^2)$ so that $(x)$ and $(a_1, \dots, a_{g-1}) = (a_1, \dots, a_{g-2}, x)$ are the only associated primes.  

Finally, we note that $q \notin (x)$ since otherwise we would have $\hgt I = 1$, and so, either $q$ is a nonzerodivisor modulo $(a_1x, \dots, a_{g-1}x)$ or $q \in (a_1, \dots, a_{g-1}) \setminus (x)$ by the preceding discussion.  
\end{proof}

\begin{rmk}
Although Engheta does not explicitly state the assumption in his classification of unmixed ideals of height two and multiplicity two, working over an algebraically closed ground field in the above theorem is necessary to ensure that prime ideals of height two and multiplicity two contain a linear form \cite[18.12]{Harris:algebraic:geometry} so that they fall into case (3) above.  For example, it is easily checked in Macaulay2 \cite{Macaulay2} that the ideal
\[ I_2\begin{pmatrix} x & y & z & w \\ -y & x & -w & z \end{pmatrix} \subseteq \QQ[x, y, z, w] \]
is a nondegenerate prime ideal of height two and multiplicity two.
\end{rmk}

\begin{cor} \label{multiplicity:2:case:Betti:tables}
Let $R = S/I$ where $I$ is an ideal minimally generated by $g \geq 4$ quadrics of one of the forms described in the preceding theorem.  Then the Betti table of $R$ is one of the following:
\vspace{1 ex}
\begin{center}
\textnormal{
\begin{tabular}{l}
\begin{tabular}{c|cccccccc}
  & 0 & 1 & 2 & 3 & $\cdots$ & $g-2$ & $g-1$ \\ 
\hline 
0 & 1 & -- & --  & -- & $\cdots$ & -- & -- \\ 
1 & -- & $g$ & $\binom{g-1}{2} + 1$  & $\binom{g-1}{3}$ & $\cdots$ & $\binom{g-1}{g-2}$ & 1 
\end{tabular} 
\\
\\[1 ex]
\begin{tabular}{c|ccccccccc}
  & 0 & 1 & 2 & $\cdots$  & $\cdots$ & $g-2$ & $g-1$ & $g$ \\ 
\hline 
0 & 1 & -- & --  & $\cdots$ & $\cdots$ &  -- & -- & -- \\ 
1 & -- & $g$ & $\phantom{ + }\binom{g-1}{2}\;$  & $\cdots$ &  $\cdots$ & $\binom{g-1}{g-2}$ & 1  & -- \\
2 & -- & -- & $g-1$ & $\cdots$ &  $\phantom{ + }\cdots\phantom{ + }$ & $\binom{g-1}{g-3}$ & $\binom{g-1}{g-2}$ & 1
\end{tabular}
\end{tabular}
}
\end{center}
\vspace{1 ex}
with the former corresponding to cases \textnormal{(i$_\text{a}$)} and \textnormal{(i$_\text{b}$)} and the latter to case \textnormal{(ii)}.
\end{cor}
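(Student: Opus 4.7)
The plan is to compute the minimal free resolution of $R = S/I$ directly in each of the three cases of Theorem \ref{multiplicity:2:case} using either a tensor product or a mapping cone construction, and then read off the Betti table. In every case except the first ideal of \textnormal{(i$_\text{a}$)}, the generators have the form $I = xJ + (q)$ with $J = (a_1,\dots,a_{g-1})$ a complete intersection of linear forms. Since $x$ is a nonzerodivisor on $S$, we have an isomorphism $xJ \cong J(-1)$, so the minimal resolution $F_\bullet$ of $S/xJ$ is a shift of the Koszul complex on $a_1,\dots,a_{g-1}$ with $\beta_{i,i+1}(S/xJ) = \binom{g-1}{i}$ for $1 \leq i \leq g-1$.

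In case \textnormal{(ii)}, $q$ is a nonzerodivisor on $S/xJ$, so the short exact sequence $0 \to S/xJ(-2) \xrightarrow{q} S/xJ \to S/I \to 0$ gives a minimal free resolution of $S/I$ as the mapping cone of $q \cdot \operatorname{id} \colon F_\bullet(-2) \to F_\bullet$ (equivalently, $F_\bullet$ tensored with the Koszul complex on $q$); minimality is automatic because $q$ has positive degree. Shifted summands $\beta_{i-1,\,i-1}(S/xJ)(-2)$ enter in the second row and produce exactly the second table. In case \textnormal{(i$_\text{b}$)}, however, $q \in J$ forces $xq \in xJ$, so $q$ is a zero-divisor on $S/xJ$ and the tensor approach fails. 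Instead, I would show that $(xJ) : q = (x)$: one inclusion is immediate from $q \in J$, while the other uses that $x$ is a prime element of $S$ with $q \notin (x)$, forcing $x \mid s$ whenever $sq \in xJ$. This gives the short exact sequence $0 \to S/(x)(-2) \xrightarrow{q} S/xJ \to S/I \to 0$. Writing $q = \sum c_i a_i$ with $c_i \in S_1$, the chain map lifting $q$ sends the degree-$2$ generator to $q$ and the degree-$3$ generator to $(c_1,\dots,c_{g-1})$, both of positive degree. The mapping cone is minimal, and its new summands $S(-2)$ and $S(-3)$ bump $\beta_{1,2}$ from $g-1$ to $g$ and $\beta_{2,3}$ from $\binom{g-1}{2}$ to $\binom{g-1}{2}+1$, matching the first table.

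Case \textnormal{(i$_\text{a}$)} splits into two subcases. When $I = (x,y) \cap (z,w)$, the variables $x,y,z,w$ being algebraically independent gives $\Tor_1^S(S/(x,y), S/(z,w)) = 0$, and hence $(x,y) \cap (z,w) = (x,y)(z,w) \cong (x,y) \otimes_S (z,w)$; the minimal free resolution of $I$ is therefore the total complex of the tensor product of the truncated Koszul resolutions of $(x,y)$ and $(z,w)$, which immediately yields the first table with $g = 4$. For $I = ((x,y)^2,\,xz+yw)$, a direct computation using that $(x,y)^2$ is $(x,y)$-primary gives $(x,y)^2 : (xz+yw) = (x,y)$, and the mapping cone applied to $0 \to S/(x,y)(-2) \to S/(x,y)^2 \to S/I \to 0$---together with the Hilbert--Burch resolution of $S/(x,y)^2$ with Betti numbers $1,3,2$---produces the same table. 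The main obstacle I foresee is not any one case but uniformly verifying minimality of the three mapping cones; in every case this reduces to checking that the constructed chain-map lifts have entries of positive degree, which is automatic because these lifts are governed by the linear coefficients expressing a quadric in terms of a minimal generating set of $J$ or $(x,y)$.
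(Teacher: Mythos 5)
Your proposal is correct and follows essentially the same route as the paper: in each case you resolve $R$ via the minimal mapping cone coming from a short exact sequence $0 \to S/(\mathrm{colon})(-2) \xrightarrow{q} S/(\text{ambient ideal}) \to R \to 0$, using the same colon computations $((a_1x,\dots,a_{g-1}x):q)=(x)$ and $((x,y)^2:(xz+yw))=(x,y)$ and the same shifted Koszul resolution of $S/(a_1x,\dots,a_{g-1}x)$. The only (harmless) deviation is the subcase $I=(x,y)\cap(z,w)$, where you invoke Tor-independence to resolve $I$ by the tensor product of the two truncated Koszul resolutions rather than the paper's mapping cone over $((xz,xw,yz):yw)=(x,z)$; both yield the same Betti table.
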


\begin{proof}
Suppose that $I = (a_1x, \dots, a_{g-1}x, q)$ for independent linear forms $a_1, \dots, a_{g-1}$, a linear form $x$, and a quadric $q$.  Using the short exact sequence 
\[ 0 \to S/((a_1x, \dots, a_{g-1}x): q)(-2) \stackrel{q}{\to} S/(a_1x, \dots, a_{g-1}x) \to R \to 0 \]
we can easily compute the minimal free resolution of $R$ as a mapping cone.  The minimal free resolution $F_\bullet$ of $S/(a_1x, \dots, a_{g-1}x)$ is the Koszul complex on $a_1, \dots, a_{g-1}$ except that the first differential is multiplied by $x$ and the resolution must be twisted by $-1$ accordingly.  It has the Betti table:
\begin{center}
\begin{tabular}{c|cccccccc}
  & 0 & 1 & 2 & $\cdots$ & $g-2$ & $g-1$  \\ 
\hline 
0 & 1 & -- & --  & $\cdots$  & -- & -- \\ 
1 & -- & $g-1$ & $\binom{g-1}{2}$ & $\cdots$ & $\binom{g-1}{g-2}$ & 1   \\
\end{tabular}
\end{center}
If $q$ is nonzerodivisor modulo $(a_1x, \dots, a_{g-1}x)$, then taking a mapping cone yields the second Betti table above.  If $q \in (a_1, \dots, a_{g-1}) \setminus (x)$, then $((a_1x, \dots, a_{g-1}x): q) = (x)$ so that taking the mapping cone of a suitable lift of multiplication by $q$ to a chain map $K_\bullet(x)(-2) \to F_\bullet$ yields the first Betti table above.

When $I = \left((x, y)^2,xz+yw\right)$ for independent linear forms $x$, $y$, $z$, and $w$, we note that $((x, y)^2: xz+yw) = (x, y)$ so that we can again compute the minimal free resolution of $R$ as mapping cone coming from the short exact sequence $0 \to S/(x,y)(-2) \stackrel{xz+yw}{\to} S/(x,y)^2 \to R \to 0$.  In this case, the chain map lifting multiplication by $xz+yw$ on the minimal free resolutions of $S/(x,y)(-2)$ and $S/(x,y)^2$ is:
\[
\begin{tikzcd}[ampersand replacement = \&, column sep = 2 cm]
0 \rar \& S(-4) \rar{\left(\begin{smallmatrix} y \\ -x \end{smallmatrix}\right)} \dar[swap]{\left(\begin{smallmatrix} z \\ w \end{smallmatrix}\right)} \& S(-3)^2 \rar{\left(\begin{smallmatrix} x & y \end{smallmatrix}\right)} \dar{\left(\begin{smallmatrix} z & 0 \\ w & z \\ 0 & w \end{smallmatrix}\right)} \& S(-2) \dar{xz+yw}
\\
0 \rar \& S(-3)^2 \rar{\left(\begin{smallmatrix} y & 0 \\ -x & y \\ 0 & -x \end{smallmatrix}\right)} \& S(-2)^3 \rar{\left(\begin{smallmatrix} x^2 & xy & y^2 \end{smallmatrix}\right)} \& S
\end{tikzcd}
\]
Taking the mapping cone yields a minimal free resolution for $R$ realizing the first Betti table above with $g = 4$.

Finally, if $I = (x, y) \cap (z, w) = (xz, xw, yz, yw)$ for independent linear forms $x$, $y$, $z$, and $w$, we note that $((xz, xw, yz): yw) = (x, z)$ so that we can again compute the minimal free resolution of $R$ as mapping cone coming from the short exact sequence $0 \to S/(x,z)(-2) \stackrel{yw}{\to} S/(xz, xw, yz) \to R \to 0$.  Since the middle term has a Hilbert-Burch resolution, the minimal free resolution of $R$ realizes the first Betti table above in a manner similar to the previous case.  
\end{proof}

Combining the preceding theorem and corollary with Theorem \ref{multiplicity:at:most:2}, we have the following.

\begin{cor}
Question \ref{Betti:number:bound:for:Koszul:algebras} has a positive answer if $R=S/I$ when $\hgt I =2$ and $e(R)>1$.
\end{cor}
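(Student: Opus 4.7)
The plan is to split on the minimal number of generators $g$ of $I$, since each regime is handled by a different prior result. The case $g \leq 1$ is vacuous since no ideal generated by fewer than two quadrics has height two. If $g = 2$, then $\hgt I = g = 2$ forces $R$ to be a quadratic complete intersection, and directly $\beta_i^S(R) = \binom{2}{i}$. If $g = 3$, then $\hgt I = g-1 = 2$ means $R$ is a Koszul almost complete intersection, so the bound is supplied by the main theorem of \cite{Koszul:ACI's}.

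The substantive case is $g \geq 4$. Here Theorem \ref{multiplicity:at:most:2} forces $e(R) \leq 2$, and the hypothesis $e(R) > 1$ pins it down to $e(R) = 2$. The structure theorem (Theorem \ref{multiplicity:2:case}) then confines $I$ to one of the shapes $(\mathrm{i}_{\mathrm{a}})$, $(\mathrm{i}_{\mathrm{b}})$, or $(\mathrm{ii})$, and Corollary \ref{multiplicity:2:case:Betti:tables} furnishes the full Betti table of $R$ in each case. It remains only to compare these tables against $\binom{g}{i}$ entry by entry, using Pascal's identity $\binom{g}{i} = \binom{g-1}{i-1} + \binom{g-1}{i}$. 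In case $(\mathrm{ii})$ the two rows of the table sum, column by column, to exactly $\binom{g-1}{i-1} + \binom{g-1}{i} = \binom{g}{i}$, so equality $\beta_i^S(R) = \binom{g}{i}$ holds for every $i$. In cases $(\mathrm{i}_{\mathrm{a}})$ and $(\mathrm{i}_{\mathrm{b}})$, the table has $\beta_i^S(R) = \binom{g-1}{i}$ for $3 \leq i \leq g-2$ and $\beta_{g-1}^S(R) = 1$, which are strictly below $\binom{g}{i}$ and $\binom{g}{g-1} = g$ respectively; the only slight subtlety is the column $i = 2$, where $\beta_2^S(R) = \binom{g-1}{2} + 1 \leq \binom{g-1}{2} + (g-1) = \binom{g}{2}$, using $g \geq 4$.

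All of the genuine work has been done in the preceding sections --- the multiplicity bound, the structure theorem for height-two multiplicity-two ideals of at least four quadrics, and the mapping-cone computation of their Betti tables --- so the corollary is essentially an assembly of those inputs with a routine binomial-coefficient verification. Accordingly there is no substantial obstacle here; the only place at which one might stumble is in forgetting to treat the small-$g$ cases separately, since the structure theorem of \S\ref{multiplicity:2:case:section} is only asserted for $g \geq 4$ and a direct appeal to \cite{Koszul:ACI's} (hence to the Koszul hypothesis) is necessary when $g = 3$.
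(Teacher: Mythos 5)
Your proposal is correct and follows essentially the same route as the paper: the corollary there is obtained by combining Theorem \ref{multiplicity:at:most:2} (which forces $e(R)=2$ once $g\geq 4$), the structure result of Theorem \ref{multiplicity:2:case}, and the Betti tables of Corollary \ref{multiplicity:2:case:Betti:tables}, with the entrywise comparison against $\binom{g}{i}$ left implicit, exactly as you carry it out. Your separate treatment of $g\leq 3$ (complete intersection for $g=2$, the Koszul almost complete intersection result for $g=3$) is a harmless supplement to the paper's standing $g\geq 4$ setting and does not alter the argument.
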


In retrospect, however, this turns out to be unsurprising as such Koszul algebras turn out to be G-quadratic up to extension of the ground field.  The proof of the next theorem assumes a familiarity with various results about Gr\"obner bases.  We refer the reader unfamiliar with these results to \cite{Herzog:Ene}, \cite{ideals:varieties:and:algorithms}, or \cite{Eisenbud} for further details and any unexplained terminology.

\begin{thm} \label{multiplicity:2:case:is:G-quadratic}
Let $R = S/I$ be a ring defined by $g \geq 4$ quadrics with $\hgt I = e(R) = 2$. Then $R$ is G-quadratic.
\end{thm}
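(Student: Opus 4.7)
The plan is to appeal to Theorem \ref{multiplicity:2:case} and handle each of the three forms of $I$ separately, exhibiting a quadratic Gr\"obner basis in each case after a suitable linear change of coordinates.

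In the first subcase, if $I = (x,y) \cap (z,w) = (xz, xw, yz, yw)$, then after a linear change of coordinates making $x, y, z, w$ into variables of $S$, the ideal is already a quadratic monomial ideal and hence G-quadratic in any monomial order. If instead $I = ((x,y)^2, xz+yw)$, I would again make $x, y, z, w$ variables of $S$ and use lex order with $x > y > z > w$. A direct check confirms that $\{x^2, xy, y^2, xz+yw\}$ is a Gr\"obner basis: all S-polynomials among the monomial generators vanish, and the remaining three S-polynomials
\[ S(x^2, xz+yw) = -xyw, \quad S(xy, xz+yw) = -y^2w, \quad S(y^2, xz+yw) = -y^3w \]
each reduce to zero modulo $xy$ or $y^2$.

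For cases (i$_\text{b}$) and (ii), both of which read $I = (a_1x, \ldots, a_{g-1}x, q)$, I would first perform a linear change of coordinates to identify $x$ and the independent linear forms $a_1, \ldots, a_{g-1}$ with variables of $S$ (taking into account the possibility that $x \in (a_1, \ldots, a_{g-1})$), then use the generators $a_i x$ to reduce $q$ modulo $(a_1x, \ldots, a_{g-1}x)$ to a canonical representative with no $a_i x$ terms. In case (i$_\text{b}$) the condition $q \in (a_1, \ldots, a_{g-1}) \setminus (x)$ lets me arrange $q$ to involve no $x$, while in case (ii) the nonzerodivisor condition guarantees that $q$ retains a ``fresh'' monomial component outside the span of the $a_i x$'s. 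Using the algebraic closure of $\kk$ to normalize the remaining quadratic form by diagonalization in the remaining variables, and choosing a lex or weighted monomial order placing $x$ and the $a_i$'s appropriately, I would exhibit the resulting generating set as a quadratic Gr\"obner basis: the S-pairs among the $a_ix$ are trivial, and those involving $q$ reduce to monomial multiples of existing quadratic generators via the normalized form of $q$.

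The main obstacle is case (ii), where $q$ may genuinely involve variables outside $\{x, a_1, \ldots, a_{g-1}\}$. A careful case analysis based on the shape of the normalized $q$ --- specifically, whether its leading monomial is $x^2$, a square of a new variable, or a product of two new variables --- is needed to verify that all S-pair reductions succeed using only quadratic elements and that no higher-degree generators must be added. This is presumably the step requiring the Gr\"obner basis familiarity the authors flag.
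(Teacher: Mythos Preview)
Your proposal follows the paper's case structure via Theorem~\ref{multiplicity:2:case}, and your treatment of case~(i$_\text{a}$) is correct and essentially identical to the paper's. For cases~(i$_\text{b}$) and~(ii), however, there is a genuine missing ingredient.

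The step you are lacking is the dimension reduction the paper performs at the start of each of these cases: by \cite[4]{spaces:of:quadrics:of:low:codimension}, G-quadraticity may be checked after killing a maximal $R$-regular sequence of linear forms, so one may assume $\dim S = \pd_S R$, namely $g-1$ in case~(i$_\text{b}$) and $g$ in case~(ii). Only after this reduction can the $a_i$ be taken to be \emph{all} (or all but one) of the variables of $S$, which confines $q$ to a polynomial ring of small fixed dimension and makes the Buchberger verification finite. Your plan to ``identify $x$ and the $a_i$ with variables of $S$'' leaves potentially many further variables $y_1,\dots,y_m$ in play; in case~(ii) the quadric $q$ can then involve these freely, and with a naive lex order the S-pair $S(a_ix,q)$ produces cubics such as $a_ia_ja_k$ or $a_iy_ky_l$ that are not divisible by any of the quadratic leading terms. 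The ``careful case analysis based on the shape of the normalized $q$'' you defer to would thus be open-ended without the reduction, and your sketch gives no mechanism for closing it.

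A second issue: your proposal to normalize $q$ ``by diagonalization'' is unavailable in characteristic~$2$ (for instance $xy$ cannot be diagonalized), so it does not cover the paper's stated generality of arbitrary characteristic. The paper instead, after the dimension reduction, arranges a single square $a_g^2$ in the support of $q$ via one square-root extraction (valid since $\kk$ is algebraically closed) together with a coordinate change, and then selects a product order making $\init_<(q)=a_g^2$ coprime to each $\init_<(a_ix)=a_1a_i$; this argument is characteristic-free.
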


\begin{proof}
We argue according to the possibilities for $I$ described in the preceding theorem.

\textsc{Case} (i$_\text{a}$): If $I = (x, y) \cap (z, w)$, then $I$ is a monomial ideal up to a linear change of coordinates so that $R$ is G-quadratic.  If $I = (x, y)^2+(xz+yw)$, then after a linear change of coordinates, we may assume that $x$, $y$, $z$, and $w$ are variables of $S$, and it is easily checked via Buchberger's Criterion that the generators of $I$ form a universal quadratic Gr\"obner basis.  Hence, $R$ is G-quadratic.

\textsc{Case} (i$_\text{b}$): Suppose that $I = (a_1x, \dots, a_{g-1}x, q)$ where $q \in (a_1, \dots, a_{g-1}) \setminus (x)$.  By \cite[4]{spaces:of:quadrics:of:low:codimension}, it suffices to prove that $R$ is G-quadratic after quotienting out a regular sequence of linear forms on $R$.  Hence, we may assume that $\dim S = \pd_S R = g - 1$.  In that case, since $a_1, \dots, a_{g-1}$ are independent linear forms, we may assume after a suitable linear change of coordinates that $a_1, \dots, a_{g-1}$ are the variables of $S$.  As $x$ is a nonzero linear form, it must contain one of the $a_i$ in its support.  Without loss of generality, after rescaling $x$ and performing a linear change of coordinate, we may assume that $x = a_{g-1} + \sum_{i = 1}^{g-2} \lambda_i a_i$ for some $\lambda_i \in \kk$.  After a further linear change of coordinates replacing $a_{g-1}$ with $a_{g - 1} - \sum_{i = 1}^{g-2} \lambda_i a_i$ and fixing all other variables and after suitable change of generators for $I$, we have that $I = (a_1a_{g-1}, \dots, a_{g-2}a_{g-1}, a_{g-1}^2, q)$ where $q \in \kk[a_1, \dots, a_{g-2}]$.  

Since $q \neq 0$, we may assume a linear change of coordinates that $q$ contains $a_1^2$ in its support.  Indeed, this is clear if $q$ contains $a_i^2$ for some $i$ by relabeling, so we may assume that $q$ is square-free.  In that case, as $g \geq 4$, we may assume that the support of $q$ contains the monomial $a_1a_2$ after relabeling so that a change of coordinates replacing $a_2$ with $a_1 + a_2$ and fixing all other variables of $S$ yields $q$ with containing $a_1^2$ in its support.  

Once we have that $a_1^2$ is in the support of $q$ with coefficient $1$, the generators of $I$ are a quadratic Gr\"obner basis.  To see this, we need only check Buchberger's criterion for the $g-1$ S-pairs of the monomial generators with $q$ since it is clear that the S-pairs of the monomials are zero.  If we choose any monomial order with $a_1$ bigger than all other variables in $S$, it is clear that the S-pairs of $q$ with each of $a_2a_{g-2}, \dots, a_{g-2}a_{g-1}, a_{g-1}^2$ reduce to zero since the leading monomial $a_1^2$ of $q$ is relatively prime to each of these other monomials \cite[2.15]{Herzog:Ene}.  Finally, by writing $q = \sum_{1 \leq i \leq j \leq g-2} \alpha_{i,j} a_ia_j$ for some $\alpha_{i, j} \in \kk$, we see that 
\[ 
S(a_1a_{g-1}, q) = a_1(a_1a_{g-1}) - a_{g-1}q = \sum_{j = 2}^{g-2} \left(\sum_{i = 1}^j \alpha_{i,j}a_i\right)(a_ja_{g-1}) 
\]
is a standard expression for $S(a_1a_{g-1}, q)$ with zero remainder since no cancellation can occur among the monomials in the forms $(\sum_{i=1}^j \alpha_{i,j} a_i)a_ja_{g-1}$. 

\textsc{Case} (ii):  As in the proof of the previous case, we may assume that $\dim S = \pd_S R = g$ after quotienting out by a maximal $R$-sequence of linear forms, and since $a_1, \dots, a_{g-1}$ are independent linear forms, we may further assume after a suitable linear change of coordinates that the $a_i$ are variables.  Let $a_g$ denote the remaining variable of $S$.  Since $q \notin (a_1, \dots, a_{g-1})$, we must have that $a_g^2$ is in the support of $q$.  

There are two cases to consider.  If $x$ does not contain $a_g$ in its support, then in any monomial order $<$ with $a_g$ larger than every other variable, we have that the S-pairs $S(a_ix, a_jx) = 0$ for all $i, j < g$ and $S(a_ix, q)$ reduces to zero for all $i < g$ since $\init_<(q) = a_g^2$ is relatively prime to $\init_<(a_ix) \in (a_1, \dots, a_{g-1})$ \cite[2.15]{Herzog:Ene}.  Hence, $a_1x, \dots, a_{g-1}x, q$ is a Gr\"obner basis for $I$.  Thus, we may assume for the rest of the proof that $a_g$ is in the support of $x$.

In that case, after rescaling $x$, we can write $x = a_g + \sum_{i = 1}^{g-1} \lambda_i a_i$ for some $\lambda_i \in \kk$.  After a linear change of coordinates replacing $a_g$ with $a_g - \sum_{i = 1}^{g-1} \lambda_ia_i$ and fixing all other variables, we may assume that $x = a_g$ so that $I = (a_1a_g, \dots, a_{g-1}a_g, q)$.  Note that this does not affect that $a_g^2$ must be in the support of $q$, and so, after a suitable change of generators for $I$, we have $q = a_g^2 + q'$ where $q' \in \kk[a_1, \dots, a_{g-1}]$.  Since $\hgt I = 2$, we must have $q' \neq 0$, and so, we may further assume that $a_1^2$ is in the support of $q'$ after a suitable change of coordinates.  Let $\delta \in \kk$ such that $\delta^2$ is the coefficient of $a_1^2$ in $q'$ (which exists since $\kk$ is algebraically closed).  Replacing $q$ with $q - 2\delta a_1a_g$ as a generator for $I$, we may assume $q = (a_g - \delta a_1)^2 + q'$ for some $q' \in \kk[a_1, \dots, a_{g-1}]$ not containing $a_1^2$ in its support, and after performing a change of coordinates replacing $a_g$ with $a_g + \delta a_1$ and fixing all other variables, we obtain $I = (a_1x, \dots, a_{g-1}x, q)$ where $x = a_g + \delta a_1$ and $q = a_g^2 + q'$.  If $<$ is the product order obtained from the degree lexicographic order on $\kk[a_1, a_g]$ with $a_1 > a_g$ followed by any other monomial order on $\kk[a_2, \dots, a_{g-1}]$, then $\init_<(q) = a_g^2$ since the only quadratic monomials larger than $a_g^2$ are $a_1^2$ and $a_1a_g$, which do not belong to the support of $q$.  And so, we have that the S-pairs $S(a_ix, a_jx) = 0$ for all $i, j < g$ and $S(a_ix, q)$ reduces to zero for all $i < g$ since $\init_<(q) = a_g^2$ is relatively prime to $a_1a_i$ \cite[2.15]{Herzog:Ene}.  Hence, $a_1x, \dots, a_{g-1}x, q$ is a Gr\"obner basis for $I$.
\end{proof}

\begin{rmk}
In the proof of preceding theorem, we needed the ground field to be algebraically closed in order to show that rings $R = S/I$ of the form (ii) in Theorem \ref{multiplicity:2:case} are G-quadratic.  If we relax the assumption about the ground field being algebraically closed, rings of this form are still easily seen to be LG-quadratic by first forming the G-quadratic algebra $A = S[u]/(a_1x, \dots, a_{g-1}x, u^2 + q)$ since $u$ is a nonzerodivisor on $A$ such that $A/(u) \iso R$.  This suggests a potential place to look for an affirmative answer to the following question.
\end{rmk}

\begin{question}
Does there exist a ring $R = S/I$ which is not G-quadratic but which becomes G-quadratic after a suitable extension of the ground field?
\end{question}

\section{The Projective Dimension Bound and Betti Tables}
\label{projective:dimension:and:multiplicity:bounds:section}

The goal of this section is to determine all Betti tables of Koszul algebras $R=S/I$ where $I$ is generated by $g=4$ quadrics and $\hgt I = 2$. For a height two ideal generated by four quadrics, it was proved in \cite{{projective:dimension:of:height:2:ideals:of:quadrics}} that $\pd_S R \leq 6$, which is a sharp bound in general.

When $R$ is Koszul, we improve this bound to $\pd_S R \leq 4$ (see Corollary \ref{Betti:number:4:quadrics}). When $\hgt I = 1, 4$, the bound is clear, and when $\hgt I = 3$, it was proved in \cite{{Koszul:ACI's}}, so it suffices to prove it when $\hgt I = 2$. This is achieved in Theorem \ref{projective:dimension:bound} below, where we prove the bound under the weaker assumption that $I$ has two linear syzygies.  Then we determine all possible Betti tables of $R$ (Theorem \ref{Koszul:4:quadric:height:2:Betti:tables}), from which we obtain the positive answer to Question \ref{Betti:number:bound:for:Koszul:algebras}.

\begin{thm} \label{projective:dimension:bound}
Let $I=(q_1,q_2,q_3,q_4) \subseteq S$ be an ideal of height two generated by four linearly independent quadrics.  If $I$ has two independent linear syzygies, then $\pd_S R \leq 4$. 
\end{thm}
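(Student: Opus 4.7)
The plan is to split on the multiplicity $e(R)$, which by Theorem~\ref{multiplicity:at:most:2} satisfies $e(R) \leq 2$ since $g = 4 \geq 4$. When $e(R) = 2$, Theorem~\ref{multiplicity:2:case} enumerates the possible forms of $I$ and Corollary~\ref{multiplicity:2:case:Betti:tables} writes down the Betti tables explicitly; for $g = 4$ the projective dimensions obtained are $3$, $3$, and $4$, so $\pd_S R \leq 4$ is immediate in this case with no use of the linear-syzygies hypothesis. Thus the real work lies in the case $e(R) = 1$.

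Assume $e(R) = 1$. Proposition~\ref{linear:primes} gives $I^{\unmixed} = (x, y)$ for independent linear forms $x, y$; write $q_i = a_i x + b_i y$ with $a_i, b_i$ linear. From the short exact sequence
\[ 0 \to (x,y)/I \to R \to S/(x,y) \to 0 \]
and $\pd_S S/(x,y) = 2$, it suffices to prove $\pd_S (x, y)/I \leq 4$. The module $(x, y)/I$ is the cokernel of the $2 \times 5$ matrix of linear forms
\[ \tilde M = \begin{pmatrix} a_1 & a_2 & a_3 & a_4 & -y \\ b_1 & b_2 & b_3 & b_4 & x \end{pmatrix}, \]
whose fifth column records the Koszul relation on $x, y$. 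Regularity of the sequence $x, y$ also ensures that every linear syzygy $(c_1, \dots, c_4)$ on $I$ extends to a linear syzygy $(c_1, \dots, c_4, \alpha)$ of $\tilde M$, so $\tilde M$ inherits the two linear syzygies of $I$.

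My approach to bound $\pd_S (x, y)/I$ is through linkage. Choose a complete intersection $C = (f, g) \subseteq I$ of two quadrics (which exists since $\hgt I = 2$), and set $J = (C : I)$. Standard linkage theory shows $J$ is unmixed of height $2$ with multiplicity $e(S/J) = e(S/C) - e(R) = 3$ and Cohen-Macaulay because $(x, y) = I^{\unmixed}$ is. Granting that $J$ is generated by quadrics, Theorem~\ref{multiplicity:at:most:2} applied to $J$ forces $J = I_2(N)$ for some $3 \times 2$ matrix $N$ of linear forms, so $S/J$ has the Hilbert-Burch resolution $0 \to S(-3)^2 \to S(-2)^3 \to S$ of length $2$. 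The standard linkage duality $I^{\unmixed}/C \iso \mathrm{Ext}^2_S(S/J, S)$ up to twist then yields $\pd_S I^{\unmixed}/C = 2$, and a mapping-cone built from the resolution of $I^{\unmixed}/C$ and the Koszul resolution of $S/C$, combined with the short exact sequence $0 \to I/C \to I^{\unmixed}/C \to I^{\unmixed}/I \to 0$, produces a resolution of $(x, y)/I$ of length at most $4$.

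The main obstacle is establishing that $J$ is generated in degree $2$: no linear form can lie in $J$ (else $\hgt I = 1$, a contradiction), and here the two-linear-syzygy hypothesis must be used to rule out higher-degree generators. A linear syzygy $\sum c_i q_i = 0$ with $c_i$ linear, reduced modulo $C = (q_1, q_2)$, yields $c_3 q_3 + c_4 q_4 \equiv 0 \pmod C$ and thus produces quadratic elements of $J$; two independent such syzygies should provide a third linearly independent quadric in $J$ beyond $f, g$, giving the three generators required by the multiplicity-$3$ classification. If making this count rigorous proves too delicate, an alternative route is to forgo linkage and prove directly that $\hgt I_2(\tilde M) \geq 4$ using the two linear syzygies of $\tilde M$, so that the Buchsbaum-Rim complex of the $2 \times 5$ matrix $\tilde M$ becomes a length-$4$ resolution of $(x, y)/I$ in one shot.
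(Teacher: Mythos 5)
Your reduction to $e(R)=1$ and the identification $(x,y)/I=\coker\tilde M$ are fine (this is essentially the representation-by-minors setup of Proposition \ref{representation:by:minors}), but the linkage route has a genuine gap, and it is not where you locate it. Since $C=(f,g)$ is unmixed of height two, membership in $C$ can be checked at the height-two associated primes of $C$, where $I$ and $I^{\unmixed}$ agree; hence $J=(C:I)=(C:I^{\unmixed})=(C:(x,y))$, and Theorem \ref{AKM} already gives $J=I_2(N)$ for a $2\times 3$ matrix of linear forms. So the step you flag as the main obstacle (quadric generation of $J$) is automatic, and as a result your argument never genuinely uses the two-linear-syzygy hypothesis. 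That is fatal: everything you compute ($J$, its Hilbert--Burch resolution, $\pd_S I^{\unmixed}/C\le 2$) depends only on $I^{\unmixed}$ and $C$ and is blind to the embedded components of $I$, which are exactly what can push $\pd_S R$ above $4$. Concretely, your final step rests on $0\to I/C\to I^{\unmixed}/C\to I^{\unmixed}/I\to 0$, which only yields $\pd_S I^{\unmixed}/I\le\max\{2,\ \pd_S I/C+1\}$, and bounding $\pd_S I/C$ is equivalent to the original problem (compare $0\to I/C\to S/C\to R\to 0$); the argument is circular. If the linkage data alone sufficed, the same reasoning would prove $\pd_S R\le 4$ for every height-two ideal of four quadrics with $e(R)=1$, contradicting the existence of such ideals with projective dimension $5$ or $6$ --- these are precisely the ones with at most one linear syzygy coming from cases (1)--(2) of Proposition \ref{representation:by:minors}; see Corollary \ref{multiplicity:1:with:no:linear:syzygies}.

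The fallback route also fails as stated: acyclicity of the Buchsbaum--Rim complex of the $2\times 5$ matrix $\tilde M$ requires $\hgt I_2(\tilde M)\ge 4$, and this is false under the hypotheses of the theorem. For instance, $I=(a_1x,a_2x,b_3y,b_4y)$ in $\kk[x,y,a_1,a_2,b_3,b_4]$ has height two, $e(R)=1$, and the two independent linear syzygies $(a_2,-a_1,0,0)$ and $(0,0,b_4,-b_3)$, yet $I_2(\tilde M)\subseteq(a_1,a_2,y)$ has height $3$; this is exactly the configuration the paper must treat separately in Lemma \ref{twoht1}. The actual proof proceeds differently: after reducing to $e(R)=1$, it runs a case analysis on the representation by minors (Proposition \ref{representation:by:minors}), disposes of the cases where some pair of generators spans a height-one ideal (Lemmas \ref{twoht1} and \ref{ht1}), proves a lemma showing $\pd_S R\le 4$ when $((q_1,q_2,q_3):q_4)$ contains two independent linear forms, and finally uses a delicate change of generators (producing a generator of the form $\ell_1 z$ with $z\in(x,y)$) to land back in the height-one-pair case. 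None of this is recovered by the proposal.
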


The proof of the preceding theorem is divided into a number of lemmas. First, by the previous section, we may focus on the case $e(R)=1$. Then, by the Associativity Formula, there are linear forms $x$ and $y$ such that $P = (x, y)$ is the only minimal prime of $I$ of height 2.  Following \cite[\S 4]{projective:dimension:of:height:2:ideals:of:quadrics}, we write $q_i=a_ix+b_iy$ for some linear forms $a_i$ and $b_i$, and we set:
\begin{equation*}
M = \begin{pmatrix}
y & a_1 & a_2 & a_3 & a_4  \\
-x & b_1 & b_2 & b_3 & b_4  
\end{pmatrix} 
\qquad 
A = \begin{pmatrix}
a_1 & a_2 & a_3 & a_4  \\
b_1 & b_2 & b_3 & b_4  
\end{pmatrix}
\end{equation*}
We recall that $I$ is said to be \emph{represented by minors} by the matrix $M$ and \emph{represented by coefficients} by the matrix $A$. As observed in \cite[4.4]{projective:dimension:of:height:2:ideals:of:quadrics}, these matrices need not be unique.  Also, we remark that $I_2(M)=I+I_2(A)$.

\begin{prop}[{\cite[4.7]{projective:dimension:of:height:2:ideals:of:quadrics}}] \label{representation:by:minors}
Let $M$ be a $2\times(n+1)$ matrix of linear forms with $n\geq 2$ representing a height two ideal of quadrics by minors. Then $M$ is equivalent via a sequence of ideal-preserving elementary operations to a matrix of linear forms $M'$ of one of the following types, where $\hgt(x,y)=2$:
\begin{enumerate}[label = \textnormal{(\arabic*)}]

\item $M'$ is 1-generic.

\item $M' = 
\begin{pmatrix}
y & 0 & a_2 & \ldots & a_n \\ 
-x & b_1 & b_2 & \ldots & b_n
\end{pmatrix}$, where  $D = 
\begin{pmatrix}
y & a_2 & \ldots & a_n \\ 
-x & b_2 & \ldots & b_n
\end{pmatrix}$ is 1-generic.

\item $M' = 
\begin{pmatrix}
y & 0 & 0 & a_3 & \ldots & a_n \\ 
-x & b_1 & b_2 & b_3 & \ldots & b_n
\end{pmatrix}$, with no additional restrictions.

\item $M' = 
\begin{pmatrix}
y & 0 & a_2 & a_3 & \ldots & a_n \\ 
-x & b_1 & 0 & b_3 & \ldots & b_n
\end{pmatrix}$, where  $D = 
\begin{pmatrix}
y & a_3 & \ldots & a_n \\ 
-x & b_3 & \ldots & b_n
\end{pmatrix}$ is 1-generic.

\item $M' =
\begin{pmatrix}
y & 0 & a_2 & a_3 & a_4 & \ldots & a_n \\ 
-x & b_1 & 0  & \lambda a_3 & b_4 & \ldots & b_n
\end{pmatrix}$, where $\lambda$ is a scalar and there are no additional restrictions.

\end{enumerate}
\end{prop}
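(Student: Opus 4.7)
The plan is to perform ideal-preserving operations to reduce $M$ to a canonical form based on the ways in which 1-genericity can fail, proceeding from the mildest failure (case (1)) to the most degenerate (case (5)).

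\textbf{Step 1 (Normalization of the first column).} Because $I$ is a height-two ideal of quadrics and (building on the preceding discussion) the prime $P = (x,y)$ sits above $I$, every element of $I$ is a linear combination of $x$ and $y$ with linear coefficients. In particular, any quadric in $I$ arises as the $2 \times 2$ minor of $(y,-x)^{\T}$ with some $(a,b)^{\T}$. Using invertible row operations on $M$ (which rescale the ideal of minors by a unit) together with column operations (which preserve $I_2(M)$ exactly), one arranges that the first column of $M$ is $(y, -x)^{\T}$.

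\textbf{Step 2 (Identify the available operations).} With the first column fixed as $(y,-x)^{\T}$, the ideal-preserving operations available to modify the remaining columns are: (i) scaling both rows by a common unit, (ii) adding scalar multiples of one non-special column to another, and (iii) adding a scalar multiple of the first column to any other column (which changes a generator of $I$ but not $I$ itself). One also has the flexibility to permute columns $2, \dots, n+1$.

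\textbf{Step 3 (Case split by 1-genericity).} Recall that a $2 \times m$ matrix of linear forms is 1-generic iff $v^{\T} N w \neq 0$ for every pair of nonzero vectors $v \in \kk^2$, $w \in \kk^m$. If $M$ is 1-generic, we are in case (1). Otherwise, there exist nonzero $v, w$ with $v^{\T} M w = 0$. I would show that, using the operations of Step 2, one may move this vanishing generalized entry into a specific matrix position; say the $(1,2)$ entry. This produces a matrix of the shape in case (2). Next, examine the residual $2 \times n$ submatrix $D$ obtained by deleting the zeroed column: if $D$ is 1-generic, we stop at case (2).

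\textbf{Step 4 (Further degenerations).} If $D$ is not 1-generic, another generalized entry vanishes, and one repeats the normalization. The position of this second zero (after clean-up) falls into two sub-cases:
\begin{itemize}
\item Another zero in the first row, yielding two columns of the form $(0, b_i)^{\T}$. This is case (3), with no further 1-genericity assumption on the leftover columns.
\item A zero in the second row of a distinct column, yielding the ``opposite-row'' configuration $(0,b_1)^{\T}, (a_2, 0)^{\T}$. Either the remaining block is 1-generic, giving case (4), or one more degeneration is needed.
\end{itemize}
In the final subcase, I would argue that the only obstruction left after two ``opposite-row'' zeros is that some further column becomes proportional to another in a way that cannot be eliminated by Step 2 operations. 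Extracting the scalar of proportionality gives the column $(a_3, \lambda a_3)^{\T}$ of case (5).

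\textbf{Main obstacle.} The technical heart is the bookkeeping that ensures every failure of 1-genericity is captured by exactly one of the five forms, and in particular that the residual 1-genericity claims in cases (2) and (4) are genuinely attainable after the reductions. Carefully analyzing the transition (4) $\to$ (5) is the most delicate point: one has to verify that a third failure of 1-genericity in the ``opposite-row'' configuration cannot merely be moved to yet another zero entry but must land on a proportionality column, because further zeroing would force $I$ to drop in height or become degenerate, contradicting our hypotheses.
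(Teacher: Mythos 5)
This proposition is not proved in the paper at all: it is quoted verbatim from \cite[4.7]{projective:dimension:of:height:2:ideals:of:quadrics}, so there is no in-paper argument to compare against. Measured against the proof in that reference, your outline follows essentially the same strategy: repeatedly detect a generalized zero $v^{\T}Mw = 0$, use column operations (legitimate because $w$ cannot be supported only on the first column, as $x$ and $y$ are independent) and row operations (legitimate because they act as a change of basis on $\langle x,y\rangle$, preserving $\hgt(x,y)=2$) to normalize its position, and then test the residual submatrix for $1$-genericity. Your identification of the two sub-cases after the first zero (same-row versus opposite-row) and of the termination after at most three degenerations is the correct bookkeeping.

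The one place where your reasoning goes astray is the justification of the passage to case (5). The proportionality column $(a_3,\lambda a_3)^{\T}$ does not arise because ``further zeroing would force $I$ to drop in height or become degenerate'' --- no height or nondegeneracy argument is involved. The actual mechanism is this: a third generalized zero is witnessed by some $v''^{\T}D w''=0$, and the dichotomy is on $v''$. If $v''$ is proportional to $e_1$ or to $e_2$, the zero can be realized as an honest zero entry by a row operation fixing the other coordinate row, which preserves the previously created zeros and lands you in case (3) (possibly after swapping rows). If instead $v''=\alpha e_1+\beta e_2$ with $\alpha,\beta$ both nonzero, the row operation needed to turn this into an actual zero entry would necessarily mix the two rows and destroy the zeros at positions $(1,2)$ and $(2,3)$; so one performs only the column operation, producing a column $(a,b)^{\T}$ satisfying $\alpha a+\beta b=0$, i.e.\ $b=\lambda a$ with $\lambda=-\alpha/\beta$. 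That linear relation between the two entries of a single column is exactly the datum recorded in case (5), and it is the obstruction to normalizing further --- not any constraint coming from the height of $I$. The same dichotomy on the witnessing row functional is also what you should invoke (rather than leave implicit) at the second degeneration step, to explain why the second zero can be placed in row 1 or row 2 without disturbing the first.
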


The next result shows that only the last three cases of the above proposition are relevant to our study of ideals defining Koszul algebras, since such an ideal must have at least two independent linear syzygies by Corollary \ref{nonACI:Koszul:algebras:have:2:linear:syzygies}.

\begin{cor} \label{multiplicity:1:with:no:linear:syzygies}
Suppose $I \subseteq S$ is a height two ideal of quadrics represented by minors by a matrix $M$ of linear forms as in \textnormal{(1)} or \textnormal{(2)} of the above proposition.  Then $I$ has no linear syzygies.
\end{cor}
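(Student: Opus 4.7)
My plan is to translate a linear syzygy on the generators of $I$ into a linear syzygy on the columns of $M$, then rule this out using the 1-generic structure. Writing $q_i = a_i x + b_i y$, a linear syzygy $\sum c_i q_i = 0$ (with $c_i \in S_1$) forces $x\sum c_i a_i + y\sum c_i b_i = 0$; the Koszul relation for the regular sequence $(x,y)$ then produces a linear form $c$ with $\sum c_i a_i = cy$ and $\sum c_i b_i = -cx$.  In column form this says $\sum_{i \geq 1} c_i v_i = c\, v_0$, where $v_0, v_1, \ldots, v_n$ are the columns of $M$, so a nontrivial linear syzygy on $I$ produces a nontrivial linear syzygy $(-c, c_1, \ldots, c_n)$ on the columns of $M$.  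Hence it suffices to show that the columns of $M$ admit no nontrivial linear syzygy.

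For case (1), let $\sum_{i=0}^n r_i v_i = 0$ be a linear syzygy with $r_i \in S_1$.  By 1-genericity of $M$, the entries of each row of $M$ are $n+1$ $\kk$-linearly independent linear forms, and the $\binom{n+1}{2}$ maximal minors of $M$ are also $\kk$-linearly independent (since the Eagon--Northcott complex is a minimal free resolution of $S/I_2(M)$).  The row-1 equation $\sum r_i M_{1i} = 0$ is then a linear syzygy on $n+1$ linearly independent linear forms, so standard Koszul theory parametrizes it as $r_k = \sum_{j>k} \alpha_{kj} M_{1j} - \sum_{j<k} \alpha_{jk} M_{1j}$ for scalars $\alpha_{kj}$.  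Substituting this into the row-2 equation, the sum collapses to a $\kk$-linear relation among the $2 \times 2$ minors of $M$, which by their linear independence forces every $\alpha_{kj} = 0$, so the syzygy is trivial.

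For case (2), we have $M_{11} = 0$ and the submatrix $D$ (obtained by deleting column $1$ of $M$) is 1-generic.  Given a linear syzygy $\sum r_i v_i = 0$, the row-1 equation reduces to $r_0 y + \sum_{i \geq 2} r_i a_i = 0$ since $M_{11} = 0$.  If $r_1 = 0$, the syzygy restricts to one on the columns of $D$, which the case-(1) argument applied to $D$ shows must be trivial.  Otherwise $r_1 \neq 0$, and I apply the same Koszul parametrization to $(r_0, r_2, \ldots, r_n)$ (using that $y, a_2, \ldots, a_n$ are $\kk$-linearly independent by 1-genericity of $D$).  Substituting into the row-2 equation, the same telescoping yields $r_1 b_1$ as a $\kk$-linear combination of $2 \times 2$ minors of $D$; in particular $r_1 b_1 \in I_2(D)$.

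The main obstacle is closing case (2): one must conclude from $r_1 b_1 \in I_2(D)$ that $r_1 = 0$.  The decisive input is that, by Eisenbud's theorem on 1-generic matrices, $I_2(D)$ is prime; since $I_2(D)$ is generated in degree $2$, it contains no nonzero linear form, so $b_1 \notin I_2(D)$.  Primality then forces $r_1 \in I_2(D)$, and as $r_1$ is a linear form we conclude $r_1 = 0$, contradicting the assumption and completing the argument.
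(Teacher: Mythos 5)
Your proposal is correct, but it takes a genuinely different route from the paper. The paper's proof is homological: using \cite[5.1]{projective:dimension:of:height:2:ideals:of:quadrics} it identifies $(I:y)=I_2(M)$ in case (1) (resp.\ $(I:y)=(b_1,I_2(D))$ in case (2)) and $(I,y)=(y,a_1x,\dots,a_nx)$, computes the Betti tables of $S/(I:y)$ and $S/(I,y)$ from the Eagon--Northcott resolution and mapping cones, and then reads off $\beta_{2,3}^S(S/I)=0$ from the Tor exact sequence induced by $0\to S/(I:y)(-1)\to S/I\to S/(I,y)\to 0$. You instead argue directly on the equations: the Koszul relation on the regular sequence $(x,y)$ converts a linear syzygy of the $q_i$ into a linear syzygy on the columns of $M$, which you then exclude by parametrizing the linear syzygies of the (independent) first-row entries by Koszul syzygies and invoking the $\kk$-linear independence of the $2\times 2$ minors, with primality of $I_2(D)$ (Eisenbud's theorem on $1$-generic matrices) handling the extra column in case (2). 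Both arguments ultimately rest on the same two inputs about $1$-generic matrices---minimality of the Eagon--Northcott resolution (equivalently, independence of the minors) and primality of the determinantal ideal---but yours bypasses the colon-ideal computations of \cite{projective:dimension:of:height:2:ideals:of:quadrics} and the mapping-cone bookkeeping, so it is more elementary and self-contained, whereas the paper's route also produces the explicit Betti tables of $S/(I:y)$ and $S/(I,y)$ and stays within the linkage framework used throughout the section. One point worth making explicit in your case (2): $b_1\neq 0$ because $q_1=b_1y$ is a (minimal) generator of $I$, and since $I_2(D)$ is generated in degree $2$ it contains no nonzero linear form; with that noted, your primality step correctly forces $r_1=0$.
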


\begin{proof}
Assume first that $M$ is 1-generic.  In that case, we must have $\hgt(y, a_1, \dots, a_n) = n+1$ or else $M$ would have a generalized zero. By \cite[5.1]{projective:dimension:of:height:2:ideals:of:quadrics}, we know that $(I : y) = I_2(M)$ and $(I, y) = (y, a_1x, \dots, a_nx)$ so that the rings $S/(I : y)$ and $S/(I, y)$ have the following Betti tables respectively:
\begin{center}
\begin{tabular}{c|cccccccc}
  & 0 & 1 & 2 & $\cdots$ & $n$  \\ 
\hline 
0 & 1 & -- & --  & -- & --  \\ 
1 & -- & $\binom{n+1}{2}$ & $2\binom{n+1}{3}$  & $\cdots$ & $n\binom{n+1}{n+1}$ 
\end{tabular}
\hspace{1 cm}
\begin{tabular}{c|cccccccc}
  & 0 & 1 & 2 & 3 & $\cdots$ & $n+1$ \\ 
\hline 
0 & 1 & 1 & --  & -- & -- & -- \\ 
1 & -- & $n$ & $\binom{n+1}{2}$  & $\binom{n+1}{3}$ & $\cdots$ & 1 
\end{tabular}
\vspace{1 em}
\end{center}
The former Betti table follows from the Eagon-Northcott resolution since $M$ is 1-generic \cite[6.2, 6.4]{geometry:of:syzygies}, and the latter follows by taking a mapping cone of multiplication by $y$ on the resolution of $S/(a_1x, \dots, a_nx)$.  The exact sequence $0 \to S/(I : y)(-1) \stackrel{y}{\to} S/I \to S/(I, y) \to 0$ induces an exact sequence
\[ 0 \to \Tor_2^S(S/I, \kk)_3 \to \Tor_2^S(S/(I, y), \kk)_3 \to \Tor_1^S(S/(I : y), \kk)_2 \to 0 \]
from which it follows that $\beta_{2,3}^S(S/I) = 0$ as wanted.

When $M$ is as in case (2) of the preceding proposition, the corollary follows by a similar argument with the following differences.  In this case, we have $\hgt(y, a_2, \dots, a_n) = n$ and $(I : y) = (b_1, I_2(D))$ so that $S/(I : y)$  and $S/(I, y)$ have the following Betti tables respectively
\begin{center}
\begin{tabular}{c|cccccccc}
  & 0 & 1 & 2 & $\cdots$ & $n-1$ & $n$  \\ 
\hline 
0 & 1 & 1 & --  & -- & -- & --  \\ 
1 & -- & $\binom{n}{2}$ & $\beta_2$  & $\cdots$ & $\beta_{n-1}$ & $(n-1)\binom{n}{n}$ 
\end{tabular}
\hspace{1 cm}
\begin{tabular}{c|cccccccc}
  & 0 & 1 & 2 & 3 & $\cdots$ & $n$ \\ 
\hline 
0 & 1 & 1 & --  & -- & -- & -- \\ 
1 & -- & $n-1$ & $\binom{n}{2}$  & $\binom{n}{3}$ & $\cdots$ & 1 
\end{tabular}
\vspace{1 em}
\end{center}
where $\beta_i = (i-1)\binom{n+1}{i+1} + \binom{n}{i+1}$ for $1 < i < n$.  Since $I_2(D)$ is a prime ideal generated by quadrics, it follows that $b_1$ is nonzerodivisor modulo $I_2(D)$, and therefore, the Betti table of $S/(I : y)$ is obtained by by taking a mapping cone of multiplication by $b_1$ on the resolution of $S/I_2(D)$.
\end{proof}

\begin{lemma}\label{twoht1}
Suppose that $I = (q_1, q_2, q_3, q_4) \subseteq S$ is a height two ideal minimally generated by four quadrics with $\hgt(q_1, q_2) = \hgt (q_3, q_4) = 1$ and $e(R) = 1$.  Then $\pd_S R \leq 4$.
\end{lemma}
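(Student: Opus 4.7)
The plan is to put $I$ into a convenient normal form and then bound $\pd_S R$ via a structural short exact sequence. Since $\hgt(q_1, q_2) = 1$ and $q_1, q_2$ are linearly independent quadrics in the polynomial ring $S$, unique factorization forces $q_1 = x f_1$ and $q_2 = x f_2$ for a linear form $x$ and linearly independent linear forms $f_1, f_2$; symmetrically $q_3 = y f_3$ and $q_4 = y f_4$. The assumption $\hgt I = 2$ forces $x$ and $y$ to be linearly independent, so $I \subseteq (x, y)$, and the hypothesis $e(R) = 1$ together with the associativity formula and Proposition~\ref{linear:primes} identifies $(x, y)$ as the unique height-two minimal prime of $I$, matching the notation set up before Proposition~\ref{representation:by:minors}. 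Setting $J_1 = (f_1, f_2)$ and $J_2 = (f_3, f_4)$, each a height-two complete intersection of linear forms, we obtain $I = xJ_1 + yJ_2$. Using that $x, y$ is a regular sequence, I would then establish the short exact sequence of graded $S$-modules
\[
0 \to \bigl[(J_1 : y) \cap (J_2 : x)\bigr](-2) \xrightarrow{c \,\mapsto\, (-yc,\, xc)} J_1(-1) \oplus J_2(-1) \xrightarrow{(a,b)\,\mapsto\, xa+yb} I \to 0,
\]
with the kernel identified by intersecting the Koszul syzygy on $(x,y)$ with $J_1 \oplus J_2$. Since $\pd_S(J_1 \oplus J_2) = 1$, bounding $\pd_S R$ reduces to bounding the projective dimension of the colon ideal on the left.

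Next I would analyze three cases according to whether $y \in J_1$ and whether $x \in J_2$. If both containments hold, writing $y = \alpha f_1 + \beta f_2$ and $x = \gamma f_3 + \delta f_4$ with $\gamma \neq 0$ (after possibly swapping $f_3$ and $f_4$), the identity $\gamma y f_3 = \alpha x f_1 + \beta x f_2 - \delta y f_4$ gives a scalar relation $\gamma q_3 = \alpha q_1 + \beta q_2 - \delta q_4$ that contradicts the linear independence of the $q_i$; the subcase $\gamma = 0$ reduces to an analogous linear dependence. If exactly one of the containments holds, say $y \in J_1$ but $x \notin J_2$, then $(J_1 : y) = S$ and $(J_2 : x) = J_2$ (since $x$ is a nonzerodivisor modulo the Cohen-Macaulay ideal $J_2$), so the colon ideal collapses to $J_2$ with $\pd_S J_2 = 1$, yielding $\pd_S I \leq 2$ and hence $\pd_S R \leq 3$. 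If neither containment holds, the colon ideal is $J_1 \cap J_2$, and I would bound $\pd_S(J_1 \cap J_2) \leq 2$ via the Mayer--Vietoris sequence
\[
0 \to S/(J_1 \cap J_2) \to S/J_1 \oplus S/J_2 \to S/(J_1 + J_2) \to 0,
\]
using $\pd_S(S/J_1 \oplus S/J_2) = 2$ and $\pd_S S/(J_1 + J_2) \leq 4$ (because $J_1 + J_2$ is generated by at most four linear forms, hence a complete intersection of height at most four). The long exact sequence of Tor then gives $\pd_S S/(J_1 \cap J_2) \leq 3$, so the SES yields $\pd_S I \leq 3$ and $\pd_S R \leq 4$.

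The main obstacle is the bookkeeping of the case analysis, in particular verifying that the ``both containments'' case genuinely forces a linear dependence among the $q_i$ and that the Mayer--Vietoris bound on $\pd_S(J_1 \cap J_2)$ is robust across all the degenerate configurations of $J_1 + J_2$ (whose height may be $2$, $3$, or $4$). Once these points are settled, the desired bound $\pd_S R \leq 4$ follows uniformly.
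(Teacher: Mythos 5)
Your proof is correct, but it follows a genuinely different route from the paper's. The paper keeps the presentation $I=(a_1x,a_2x,b_3y,b_4y)$ and runs the colon-ideal technique used throughout Section 4: it first invokes the hypothesis $e(R)=1$ (via Corollary \ref{multiplicity:2:case:Betti:tables}) to rule out $\hgt(x,b_3,b_4)=2$, then computes $(I,x)=(x,b_3y,b_4y)$ and $(I:x)=(a_1,a_2,b_3y,b_4y)$, bounds each projective dimension by passing to the quotient polynomial rings $S/(x)$ and $S/(a_1,a_2)$, and concludes from $0\to S/(I:x)(-1)\to R\to S/(I,x)\to 0$. You instead decompose $I=xJ_1+yJ_2$ with $J_1,J_2$ complete intersections of linear forms, identify the kernel of $J_1(-1)\oplus J_2(-1)\to I$ as $[(J_1:y)\cap(J_2:x)](-2)$ using that $x,y$ is a regular sequence, and then split into cases by the containments $y\in J_1$, $x\in J_2$: the ``both'' case is correctly excluded because $xy=\alpha q_1+\beta q_2=\gamma q_3+\delta q_4$ would be a nontrivial $\kk$-linear relation among the four minimal quadric generators; the ``one'' case gives kernel $J_2$ (or $J_1$) and $\pd_S R\le 3$; and the ``neither'' case gives kernel $J_1\cap J_2$ with $\pd_S S/(J_1\cap J_2)\le 3$ from Mayer--Vietoris, since $J_1+J_2$ is generated by at most four linear forms regardless of its height, whence $\pd_S I\le 3$ and $\pd_S R\le 4$. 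Notably, your argument never uses $e(R)=1$ beyond setting notation, so it proves the slightly stronger statement that the bound holds for every height-two ideal minimally generated by four quadrics with $\hgt(q_1,q_2)=\hgt(q_3,q_4)=1$ (consistent with Corollary \ref{multiplicity:2:case:Betti:tables}, which already handles the $e(R)=2$ configurations the paper excludes); what the paper's version buys is uniformity with the colon-ideal arguments of the subsequent lemmas, while yours is more self-contained and yields the sharper bound $\pd_S R\le 3$ in the single-containment case.
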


\begin{proof}
By assumption, $I = (a_1x, a_2x, b_3y, b_4y)$ for some linear forms $a_i$, $b_i$, $x$, and $y$ such that $\hgt(x, y) = 2$.  If $\hgt(x, b_3, b_4) = 2$, then without loss of generality after a suitable change of generators, we may assume that $b_3 = x$.  But this implies that $e(R) = 2$ by Corollary \ref{multiplicity:2:case:Betti:tables}, contrary to our assumptions.  Hence, we must have $\hgt(x, b_3, b_4) = 3$.  In that case, $(I, x) = (x, b_3y, b_4y)$ where $b_3y$ and $b_4y$ are nonzero quadrics modulo $x$.  Consequently, over the polynomial ring $\bar{S} = S/(x)$ we have $\pd_{\bar{S}} S/(I, x) \leq 2$ so that $\pd_S S/(I, x) \leq 3$ by \cite[4.3.3]{Weibel}.  On the other hand, we have $(I : x) = (a_1, a_2) + ((b_3y, b_4y) : x) = (a_1, a_2, b_3y, b_4y)$ since $\hgt(x, b_3, b_4) = 3$ and $\hgt(x, y) = 2$.  Again, over the polynomial ring $\bar{S} = S/(a_1, a_2)$, the image of $(I : x)$ is generated by two quadrics.  Thus, we have $\pd_{\bar{S}} S/(I : x) \leq 2$ so that $\pd_S S/{(I : x)} \leq 4$.  The short exact sequence $0 \to S/(I : x)(-1) \stackrel{x}{\to} R \to S/(I, x) \to 0$ then implies that $\pd_S R \leq \max\{\pd_S S/(I : x), \pd_S S/(I, x)\} \leq 4$ as wanted. 
\end{proof}

\begin{lemma}\label{ht1}
Suppose that $I = (q_1, q_2, q_3, q_4) \subseteq S$ is a height two ideal minimally generated by four quadrics having at least two linear syzygies and that $\hgt(q_i, q_j)=1$ for some $i \neq j$.  Then $\pd_S R \leq 4$.
\end{lemma}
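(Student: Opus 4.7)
The plan is to combine the multiplicity reduction from Section~\ref{multiplicity:2:case:section} with a short-exact-sequence analysis that exploits the second linear syzygy. By Theorem~\ref{multiplicity:at:most:2} one has $e(R)\le 2$, and the case $e(R)=2$ is already covered by Corollary~\ref{multiplicity:2:case:Betti:tables}, whose explicit Betti tables for $g=4$ all give $\pd_S R\le 4$. So I may assume $e(R)=1$, and then the Associativity Formula yields that $P=(x,y)$ is the unique height-two minimal prime of $I$. After reindexing so $\hgt(q_1,q_2)=1$, write $q_1=a_1 w$ and $q_2=a_2 w$ with linearly independent linear forms $a_1,a_2$. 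Primality of $P$ applied to $q_1,q_2\in P$ forces either $w\in P$ (WLOG $w=x$, so $q_1=a_1 x$ and $q_2=a_2 x$), or $\{a_1,a_2\}\subseteq P$ and hence $(a_1,a_2)=P$ (WLOG $q_1=xw$, $q_2=yw$ with $w\notin P$); the two cases are structurally symmetric, so I treat the first.

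Next, apply the short exact sequence
\[
0\to S/(I:x)(-1)\xrightarrow{\cdot x}R\to S/(I+(x))\to 0.
\]
The quotient $(I+(x))/(x)$ is generated by at most two quadrics in the polynomial ring $S/(x)$, so $\pd_S S/(I+(x))\le 3$. It therefore suffices to bound $\pd_S S/(I:x)\le 4$. One has $(I:x)=(a_1,a_2)+((q_3,q_4):x)$, and if $\hgt(q_3,q_4)=1$ then Lemma~\ref{twoht1} applies directly. The remaining case is $\hgt(q_3,q_4)=2$, where $(q_3,q_4)$ is an $(x,y)$-primary complete intersection (by $e(R)=1$) and $x$ is a zerodivisor modulo it. Here the second linear syzygy is the key input: a brief verification shows that any linear syzygy $\sum_i\ell_i q_i=0$ independent from the Koszul syzygy $a_2 q_1-a_1 q_2=0$ must have $(\ell_3,\ell_4)\ne(0,0)$, and the relation rewrites as
\[
\ell_3 q_3+\ell_4 q_4 = -x(\ell_1 a_1+\ell_2 a_2)\in x\cdot(a_1,a_2),
\]
so $(\bar\ell_3,\bar\ell_4)$ is a linear syzygy of $(\bar q_3,\bar q_4)$ in $\bar S:=S/(x)$. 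A case analysis on the behavior of $\bar q_3,\bar q_4$ in $\bar S$ then finishes the argument: if one of them vanishes or they share a common linear factor, a generator change exhibits another pair of generators of $I$ with height-one intersection, so Lemma~\ref{twoht1} applies; if $\bar q_3,\bar q_4$ form a regular sequence in $\bar S$ they admit no nonzero linear syzygy, forcing $\ell_3,\ell_4\in(x)$, whence a scalar combination $\alpha q_3+\beta q_4\in(a_1,a_2)$. In this last situation the image of $(I:x)$ in the polynomial ring $\bar S':=S/(a_1,a_2)$ collapses to an ideal inside the principal ideal generated by the image of $q_4$, which forces $\pd_{\bar S'}\le 2$ and hence $\pd_S S/(I:x)\le 4$.

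The main obstacle is the regular-sequence subcase: the $(x,y)$-primary structure of $(q_3,q_4)$ contributes extra elements to $(q_3,q_4):x$ beyond $(q_3,q_4)$ itself, and these extras must be controlled after passage to $\bar S'$. The two-linear-syzygy hypothesis plays its essential role precisely here by forcing a scalar combination of $q_3,q_4$ into $(a_1,a_2)$, thereby collapsing the image of $(q_3,q_4)$ in $\bar S'$ to a principal ideal and keeping the projective dimension of the image of $(I:x)$ bounded by $2$. Parallel bookkeeping handles the second normalization ($q_1=xw$, $q_2=yw$ with $w\notin P$) by running the same argument with $w$ in place of $x$.
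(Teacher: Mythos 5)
Your skeleton matches the paper's in outline (reduce to $e(R)=1$, split according to whether the common factor of $q_1,q_2$ lies in $P$, colon by $x$, use $(I:x)=(a_1,a_2)+((q_3,q_4):x)$ and the second linear syzygy), but the decisive step is a non sequitur. From ``the image $\bar J$ of $(I:x)$ in $\bar S'=S/(a_1,a_2)$ lies inside the principal ideal $(\bar q_4)$'' you cannot conclude $\pd_{\bar S'}\bar S'/\bar J\le 2$: an ideal contained in a principal ideal can have arbitrarily large projective dimension (think of $q\cdot\mathfrak{n}$ for a large ideal $\mathfrak{n}$). Moreover the containment itself needs $\bar x$ to be a nonzerodivisor modulo $\bar q_4$ in $\bar S'$, which you never check; if that coprimality did hold, the squeeze $(\bar q_4)\subseteq\bar J\subseteq((\bar q_4):\bar x)=(\bar q_4)$ would indeed finish, but the degenerate cases ($\bar q_4=0$ or $\bar x\mid\bar q_4$ in $\bar S'$) are exactly where your argument has nothing to say. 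What the paper actually proves is that $\bar J$ is generated by at most two quadrics, and this requires structural control of the colon: using $e(R)=1$ one shows $(I,x)^{\unmixed}=(x,y)$, hence $((q_3,q_4):x)=((q_3,q_4):(x,y))$, and Theorem \ref{AKM} identifies this link with a Northcott ideal $I_2(M)$ generated by three quadrics, one of which (namely $\ell_1a_1+\ell_2a_2$, nonzero by the extra syzygy) lies in $(a_1,a_2)$. You have no bound on the number of generators of $((q_3,q_4):x)$, so no bound on $\pd$ follows. (Your side claim that $(q_3,q_4)$ is $(x,y)$-primary ``by $e(R)=1$'' is also false in general --- it can have height-two minimal primes not containing $I$ --- though you do not seem to use it.)

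The case analysis over $\bar S=S/(x)$ is also incomplete. If $\bar q_3,\bar q_4$ are nonzero and not a regular sequence they may be proportional without sharing a common \emph{linear} factor; this missing case is harmless (some combination of $q_3,q_4$ then lies in $(x)$, giving three generators divisible by $x$, which contradicts $e(R)=1$ via Theorem \ref{multiplicity:2:case} or contradicts minimal generation), but the ``common linear factor'' branch is not: a common factor of $\bar q_3,\bar q_4$ modulo $x$ does not lift to a common factor of any two generators of $I$ in $S$, and Lemma \ref{twoht1} moreover requires the four generators to split into two \emph{disjoint} height-one pairs, so ``a generator change exhibits another pair\dots so Lemma \ref{twoht1} applies'' is unjustified; this configuration is essentially where the real difficulty lives, and the paper's linkage argument handles it uniformly rather than by cases. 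Similarly, when one of $\bar q_3,\bar q_4$ vanishes the correct conclusion is a contradiction with $e(R)=1$ or with minimality, not an appeal to Lemma \ref{twoht1}. Finally, the second normalization $q_1=xw$, $q_2=yw$ with $w\notin P$ is not ``the same argument with $w$ in place of $x$'': there one has $(I:w)\subseteq(I^{\unmixed}:w)=(x,y)$, hence $(I:w)=(x,y)$ and $\pd_S R\le 3$ directly, a different (and much simpler) computation.
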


\begin{proof}
We may assume that $\hgt(q_1,q_2)=1$ and $e(R) = 1$. Let $(x,y)$ be the only minimal prime of height 2 of $I$.  By assumption, there exist linear forms $c, d_1, d_2$, where $d_1, d_2$ are linearly independent, such that $q_i=cd_i$ for $i=1,2$. Then either $c \in (x, y)$, or $c \notin (x, y)$ and $(d_1, d_2)\subseteq (x, y)$, which implies $(d_1,d_2)=(x, y)$. 

First assume $c\in (x,y)$. Then we can write $q_1 = a_1x$, $q_2 = a_2x$, and $q_i = a_ix + b_iy$ for $i = 3, 4$ for suitable linear forms $x, y, a_i, b_i$ with $\hgt(x, y) = 2$. Now, $(I, x)=(x, q_3, q_4)$ so that $\pd_S S/(I, x) \leq 3$. To prove the statement, it suffices to show that $\pd_S S/(I : x) \leq 4$. 

Observe that $(I : x) = (a_1, a_2)+((q_3, q_4) : x)$. By the preceding lemma, we may further assume that $\hgt (q_3, q_4) = 2$.  Then
\[ ((q_3, q_4) : x) = ((q_3, q_4) : (I, x)) = ((q_3, q_4): (I, x)^{\unmixed}) = ((q_3, q_4) : (x, y)) \]
where the leftmost equality follows because $(q_3,q_4,x)=(I,x)$, and the rightmost equality because $x$ lies in the only minimal prime of $I$, thus $(I,x)^{\unmixed}=(x,y)$. It follows that $((q_3,q_4) : x) = ((q_3,q_4) : (x,y))$ is linked via $(q_3,q_4)$ to the complete intersection $(x,y)$.  Thus, by Theorem \ref{AKM}, we have $((q_3,q_4): x) = I_2(M)$, where $M$ is the $3 \times 2$ matrix of linear forms
\[
M = 
\begin{pmatrix}
-a_4 & -b_4 \\
a_3 & b_3 \\
-y & x
\end{pmatrix}
.
\]
Then $(I : x) = (a_1, a_2, I_2(M))$.  However, since $I$ has at least two independent linear syzygies, there is a linear syzygy $\ell = (\ell_1, \ell_2, \ell_3, \ell_4)$ which is not a scalar multiple of $a =(a_2, -a_1, 0, 0)$.  If $\ell_1a_1 + \ell_2a_2 = 0$, then $(\ell_1, \ell_2, 0, 0)$ is a scalar multiple of $a$ so that we can replace $\ell$ with $(0, 0, \ell_3, \ell_4)$, but this contradicts that $(q_3, q_4)$ is a complete intersection of quadrics.  Hence, we see that $\ell_1a_1 + \ell_2a_2 \in ((q_3, q_4) : x) = I_2(M)$ is a minimal generator, and this implies that $(I : x)/(a_1, a_2) = (a_1, a_2, I_2(M))/(a_1, a_2)$ is generated by at most two independent quadrics so that $\pd_S S/(I : x) \leq 4$, by an argument similar to the one near the end of the proof of Lemma \ref{twoht1}.

Next, assume $(d_1,d_2)=(x,y)$ and $c \notin (x, y)=I^{\unmixed}$.  Then $I=(cx, cy, q_3, q_4)$, and $\hgt (I, c) \geq 3$.  It follows that $(I, c)$ is a complete intersection of height 3 and $\pd_S S/(I,c) =3$. Moreover, since $c\notin (x,y)=I^{\unmixed}$, we clearly have $(x,y)\subseteq {(I:c)} \subseteq ((x, y): c) = (x, y)$ so that $(I:c) = (x,y)$ and $\pd_S S/(I:c) =2$.  From the short exact sequence $0 \to S/(I : c) \to S/I \to S/(I,c) \to 0$, we deduce that $\pd_S S/I \leq 3$, concluding the proof.
\end{proof}

\begin{lemma}
Let $I=(q_1,q_2,q_3,q_4) \subseteq S$ be an ideal of height two generated by four linearly independent quadrics with $e(R) = 1$, and set $J = (q_1, q_2, q_3)$.  If $(J : q_4)$ contains two independent linear forms, then $\pd_S R \leq 4$. 
\end{lemma}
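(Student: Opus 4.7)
The plan is to apply the short exact sequence
\[ 0 \to S/(J:q_4)(-2) \xrightarrow{\,\cdot\, q_4\,} S/J \to S/I \to 0, \]
yielding $\pd_S R \leq \max\{\pd_S S/J,\ \pd_S S/(J:q_4)+1\}$, and to bound each piece. The hypothesis that $(J:q_4)$ contains two independent linear forms $\ell_1, \ell_2$ produces two independent linear syzygies on the $q_i$'s (from the identities $\ell_i q_4 \in J$), so Lemmas \ref{twoht1} and \ref{ht1} let me reduce to the case $\hgt(q_i, q_j) = 2$ for all $i \neq j$; in particular $\hgt J \in \{2, 3\}$. If $\hgt J = 3$ then $J$ is a complete intersection and $\pd_S S/J = 3$, while if $\hgt J = 2$ I invoke the bounds on height two ideals of three quadrics (e.g.\ \cite{Koszul:algebras:defined:by:3:quadrics}) to obtain $\pd_S S/J \leq 4$.

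The heart of the argument is proving $\pd_S S/(J:q_4) \leq 3$. Setting $N = (J:q_4)$ and $\bar S = S/(\ell_1, \ell_2)$, the inclusion $(\ell_1, \ell_2) \subseteq N$ identifies $S/N \cong \bar S/\bar N$ as $S$-modules, where $\bar N = N/(\ell_1, \ell_2)$; by change of rings one has $\pd_S S/N \leq \pd_{\bar S}\bar S/\bar N + 2$, so it suffices to show that $\bar N$ is a principal ideal of $\bar S$ generated by a nonzerodivisor. Writing each defining relation $\ell_i q_4 = \sum_{j=1}^3 f_{ij} q_j$ with $f_{ij}$ linear and reducing modulo $(\ell_1, \ell_2)$ yields two linear syzygies on $\bar q_1, \bar q_2, \bar q_3$ in $\bar S$ encoded by the $2 \times 3$ matrix $\bar F = (\bar f_{ij})$. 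Combined with the cyclicity of $S/N \cong I/J(2)$ as an $S$-module, these syzygies should determine $\bar N$ explicitly.

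The main obstacle is carrying out this identification. I expect a case analysis driven by whether $\bar q_4 \in \bar J$ (in which case $q_4$ can be replaced modulo $J$ by an element of $(\ell_1, \ell_2)$, reducing $\bar N$ to a direct computation) and by the rank of $\bar F$: in the generic situation Hilbert-Burch applied to the $3 \times 2$ transpose of $\bar F$ should identify $\bar J$ with a determinantal ideal in $\bar S$, after which linkage pins down $(\bar J : \bar q_4)$ precisely; the degenerate-rank cases should typically fall into the hypotheses of Lemmas \ref{twoht1} or \ref{ht1} after a suitable change of generators of $I$. Combining these bounds produces $\pd_S R \leq \max\{4, 3+1\} = 4$, as desired.
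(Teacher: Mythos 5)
Your overall frame agrees with the paper's: both start from the exact sequence $0 \to S/(J:q_4)(-2) \to S/J \to R \to 0$ and aim to bound the two outer terms. But the crux of the lemma is precisely the bound $\pd_S S/(J:q_4) \leq 3$, and your proposal does not prove it; it only outlines a strategy (reduce mod $(\ell_1,\ell_2)$, hope the reduced syzygy matrix $\bar F$ has full rank, apply Hilbert--Burch "in the generic situation," and assert that degenerate cases "typically" fall under Lemmas \ref{twoht1} or \ref{ht1}). Each of these steps has a real hole: the rows of $\bar F$ can drop rank or vanish entirely (nothing prevents the coefficients $f_{ij}$ from lying in $(\ell_1,\ell_2)$); Hilbert--Burch requires knowing that the relevant ideal of $\bar S$ is perfect of grade two, which is not established; and "typically" is not an argument. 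So the heart of the proof is missing. Two smaller slips: since $J \subseteq I$ and $\hgt I = 2$, one always has $\hgt J \leq 2$, so your case $\hgt J = 3$ is vacuous (the paper instead shows $\hgt J = 2$ by noting that $\hgt J = 1$ would force $e(R) = 2$ via Corollary \ref{multiplicity:2:case:Betti:tables}); and the bound $\pd_S S/J \leq 4$ for a height two ideal of three quadrics comes from \cite{projective:dimension:of:height:2:ideals:of:quadrics}, not from \cite{Koszul:algebras:defined:by:3:quadrics} --- $S/J$ is not assumed Koszul.

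For comparison, the paper establishes the colon bound by a multiplicity/linkage analysis rather than by working in $\bar S$. It splits on $e(S/J)$. If $e(S/J) \geq 2$, additivity of the multiplicity symbol along the colon exact sequence forces $\hgt(J:q_4) = 2$, and since the height two linear prime $(\ell,\ell')$ sits inside $(J:q_4)$, one gets $(J:q_4) = (\ell,\ell')$, a complete intersection, so $\pd_S S/(J:q_4) = 2$ while $\pd_S S/J \leq 4$. If $e(S/J) = 1$, then $J^{\unmixed} = (x,y)$; linking the complete intersection $C = (q_1,q_2)$ to $(x,y)$ shows $(C:q_3)$ is Cohen--Macaulay of height two (Theorem \ref{AKM}), whence $\pd_S S/J \leq 3$; this bounds the heights of all associated primes of $S/J$ by three, which pins down $\hgt(J:q_4) = 3$, and a short argument (the "$H = S$" claim, again using $\Ass(S/(J:q_4)) \subseteq \Ass(S/J)$) shows $(J:q_4) = (\ell,\ell',f)$ is a height three complete intersection, so $\pd_S S/(J:q_4) = 3$. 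Your target statement --- that $(J:q_4)$ is $(\ell_1,\ell_2)$ modulo which nothing or a single nonzerodivisor remains --- is indeed what turns out to be true, but to complete your proof you would need to supply arguments of this kind (control of $\hgt(J:q_4)$ and of the associated primes of $S/J$), not the Hilbert--Burch computation as sketched.
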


\begin{proof}
Let $\ell$ and $\ell'$ be the independent linear forms contained in $(J : q_4)$.  We note that $\hgt J = \hgt I = 2$.  If not, then $q_i = a_ix$ for some linear forms $a_i$ and $x$ for $i = 1, 2, 3$, and Corollary \ref{multiplicity:2:case:Betti:tables} implies that $e(R) = 2$, contrary to our assumption that $e(R)=1$.  Since $\kk$ is infinite and $\hgt J = 2$, we may further assume that $\hgt(q_1, q_2) = 2$.  

We first observe that the statement holds when $e(S/J) \geq 2$.  Consider the short exact sequence 
\begin{equation} \label{colon:exact:sequence} 
0 \longto S/(J : q_4)(-2) \stackrel{q_4}{\longto} S/J \longto R \longto 0 .
\end{equation}
Since the multiplicity symbol is additive along exact sequences (see \cite[V.A.2]{Serre:local:algebra}), we must have $\hgt (J : q_4) = 2$.  As $(\ell, \ell')$ is a height two prime ideal in $(J : q_4)$, it follows that $(J : q_4) = (\ell, \ell')$ is a complete intersection. Since $J$ is a height two ideal generated by three quadrics, it follows from \cite{projective:dimension:of:height:2:ideals:of:quadrics} that $\pd_S S/J \leq 4$ so that $\pd_S R \leq \max\{\pd_S S/(J: q_4) + 1, \pd_S S/J\} \leq 4$. 

We may therefore assume that $e(S/J) = 1$.  In that case, we must have $J^{\unmixed} = (x, y)$, where $x$ and $y$ are linear forms generating the unique prime ideal of height 2 containing $I$.  Let $C = (q_1, q_2)$, which we have assumed is a complete intersection. We then have 
\[
(C : q_3) = (C : J) = (C : J^{\unmixed}) = (C : (x,y)).
\]
As in the proof of the previous lemma, $(C : q_3)$ is a Northcott ideal of height two directly linked to $(x, y)$; in particular, it is Cohen-Macaulay so that ${\pd_S S/(C : q_3)} = 2$. It then follows from the short exact sequence
\[
0 \longto S/(C : q_3)(-2) \stackrel{q_3}{\longto} S/C \longto S/J \longto 0
\] 
that $\pd_S S/J \leq \max\{\pd_S S/(C: q_3) + 1, \pd_S S/C\} = 3$. In particular, we have $\hgt Q \leq 3$ for every $Q \in \ass(S/J)$ by the Auslander-Buchsbaum formula after localizing at $Q$. Since $q_4 \in (x, y)=J^{\unmixed}$ and $q_4 \notin J$, it follows that $(J : q_4)$ is a proper ideal of height at least 3. Since $\Ass(S/(J:q_4))\subseteq \Ass(S/J)$ and every associated prime of $S/J$ has height at most three, it follows that $\hgt (J : q_4) = 3$. Since $(J : q_4)$ contains the height two linear prime $(\ell, \ell')$, then $(J : q_4) = (\ell, \ell')+ fH$ for some form of positive degree $f \notin (\ell, \ell')$ and an ideal $H \subseteq S$ such that  either $H = S$ or $(\ell, \ell') + H$ is a proper ideal of height at least four. 

We claim that $H = S$.  If not, then $(\ell, \ell') + H = ((J : q_4) : f)$ is a proper ideal of height at least four.  However, since $\ass(S/((J : q_4) : f)) \subseteq \ass(S/(J : q_4)) \subseteq \ass(S/J)$, we see that $(\ell, \ell') + H$ has height at most 3, which is a contradiction.

Therefore, $(J : q_4) = (\ell, \ell', f)$ is a complete intersection of height three. As $\pd_S S/J \leq 3$ and $\pd_S S/(J : q_4) = 3$, \eqref{colon:exact:sequence} yields $\pd_S R \leq \max\{{\pd_S S/(J : q_4)} + 1, \pd_S S/J\} \leq 4$.
\end{proof}

We can now prove Theorem \ref{projective:dimension:bound}.

\begin{proof}[Proof of Theorem \ref{projective:dimension:bound}]
By Theorem \ref{multiplicity:2:case} and Corollary \ref{multiplicity:2:case:Betti:tables}, we may assume that $e(R) = 1$.  By Corollary \ref{multiplicity:1:with:no:linear:syzygies}, we know that $I$ can be represented by minors by a matrix as in cases (3)--(5) of Proposition \ref{representation:by:minors}.  We may further assume that $\hgt (q_i, q_j) = 2$ for all $i \neq j$ by Lemma \ref{ht1} so that  we are in case (4) or (5).  In particular, we may assume that $q_1 = b_1y$ and $q_2 = a_2x$, where $x$ and $y$ are the linear forms generating the unique minimal prime of $I$ of height 2.  Let $\ell = (\ell_1,\ell_2,\ell_3,\ell_4)$ and $h = (h_1,h_2,h_3,h_4)$ in $S(-2)^4$ be linearly independent syzygies of $I$.  If $\ell_4$ and $h_4$ are independent linear forms, then we are done by the preceding lemma.  So we may assume that $\ell_4$ and $h_4$ are linearly dependent, and therefore, after possibly switching $\ell$ and $h$ and subtracting a multiple of $h$ from $\ell$, we may further assume that $\ell_4 = 0$.  

In this case, we claim there exists a suitable choice of generators $I = (q'_1, q'_2, q'_3, q_4)$ such that two of the generators generate a height one ideal so that the conclusion follows from Lemma \ref{ht1}. Consider the complete intersection $C = (q_2, q_3)$, and write $q_i=a_ix+b_iy$ for $i=2,3$. If $\hgt(y, a_2, a_3) = 2$, then we have $\alpha y + \beta a_2 + \gamma a_3 = 0$ for some $\alpha, \beta, \gamma \in \kk$ not all zero.  If $\gamma = 0$, then $a_2 \in (y)$, and if $\gamma \neq 0$, then $a_3 \in (a_2, y)$.  Hence, in either case, after a suitable change of generators, we see that $y$ divides two generators of $I$, and we are done.  So we may further assume that $\hgt (y, a_2, a_3) = 3$.  Then $(C, y)^{\unmixed} = (y, a_2x, a_3x)^{\unmixed} = (x, y)$.  Indeed, it is clear that $(x, y)$ is the unique height two prime ideal containing $(C, y)$, and $(C, y)S_{(x, y)} = (x, y)S_{(x, y)}$ since $(y, a_2, a_3) \nsubseteq (x, y)$ implies either $a_2$ or $a_3$ is not in $(x, y)$.  And so, it follows that $(C : y) = (C : (C, y)) = (C : (C, y)^{\unmixed}) = (C : (x, y))$.  As $\ell_1b_1 \in (C: y)$, we have $\ell_1b_1x, \ell_1b_1y \in C$.  We note that $\ell_1 \neq 0$ since otherwise we would have a linear syzygy $\ell_2q_2 + \ell_3q_3 = 0$, contradicting that $C$ is a complete intersection.  If $b_1$ is a nonzerodivisor modulo $C$, then $\ell_1x, \ell_1y \in C$ are independent quadrics so that $C = (\ell_1x, \ell_1y)$, contradicting that $C$ has height two.  Hence, $b_1$ is a zerodivisor modulo $C$ so that $(C, b_1)$ is a height two ideal.  Then $C' = (C, b_1)^{\unmixed}$ is an unmixed height two ideal containing a linear form, and therefore, it is a complete intersection.  Consequently, we have that $(C : b_1) = (C : C')$ is linked in one step to a complete intersection.  Thus, $(C : b_1) = (C, f)$ is a Cohen-Macaulay height two ideal for some homogeneous form $f$ of positive degree by Theorem \ref{AKM}, and this ideal contains the independent quadrics $\ell_1x$ and $\ell_1y$.  This implies that there is a nonzero linear form $z \in (x, y)$ such that $\ell_1z \in C$.  Indeed, we can write $\ell_1x = \alpha_1f + \alpha_2q_2 + \alpha_3q_3$ and $\ell_1y = \beta_1f + \beta_2q_2 + \beta_3q_3$ for some $\alpha_i, \beta_i \in \kk$.  Note that $\alpha_1$ and $\beta_1$ are not both zero, otherwise we would have $C = (\ell_1x, \ell_2 y)$ contradicting that $\hgt C = 2$.  Hence, $\ell_1z \in C$ for $z = \beta_1x - \alpha_1y$.

Without loss of generality, we can then replace $q_2$ with $q'_2 = \ell_1z$ as a generator for $C$ and still have a linear syzygy of the form $\ell_1(b_1y) + \ell_2q'_2 + \ell_3q_3 = 0$. In that case, as $(q'_2, q_3) = C \subseteq (\ell_1, q_3)$, we see that $\hgt (\ell_1, q_3) = 2$ so that the above syzygy yields $\ell_3 \in (\ell_1: q_3) = (\ell_1)$.  If $\ell_3 = 0$, then $\hgt(q_1, q'_2) = 1$, and if $\ell_3 \neq 0$, then $q_3 = \lambda(q_1 + \ell_2z)$ for some $\lambda \in \kk$ so that replacing $q_3$ with $q'_3 = \ell_2z$ as a generator for $I$ yields $\hgt(q'_2, q'_3) = 1$.
\end{proof}

Combining the results of the preceding sections, we can determine all Betti tables of Koszul algebras defined by height two ideals of four quadrics; they are precisely the Betti tables we expect from Table \ref{height:2:4-generated:edge:ideals}.

\begin{thm} \label{Koszul:4:quadric:height:2:Betti:tables}
Let $R = S/I$ be a Koszul algebra defined by four quadrics with $\hgt I = 2$.  Then the Betti table of $R$ over $S$ is one of the following:
\vspace{1 ex}
\begin{center}
\begin{minipage}{\textwidth}
\begin{multicols}{2}
\begin{enumerate}[label = \textnormal{(\roman*)}]
\item 
\textnormal{
\begin{tabular}{c|cccccccc}
  & 0 & 1 & 2 & 3  \\ 
\hline 
0 & 1 & -- & --  & --\\ 
1 & -- & 4 & 4  & 1 
\end{tabular}
}

\vspace{1 ex}

\item
\textnormal{
\begin{tabular}{c|cccccccc}
  & 0 & 1 & 2 & 3 & 4 \\ 
\hline 
0 & 1 & -- & --  & -- & -- \\ 
1 & -- & 4 & 3  & 1  & -- \\
2 & -- & -- & 3 & 3 & 1
\end{tabular}
}

\item
\textnormal{
\begin{tabular}{c|cccccccc}
  & 0 & 1 & 2 & 3  \\ 
\hline 
0 & 1 & -- & --  & -- \\ 
1 & -- & 4 & 3  & -- \\
2 & -- & -- & 1 & 1 
\end{tabular}
}

\vspace{1 ex}

\item
\textnormal{
\begin{tabular}{c|cccccccc}
  & 0 & 1 & 2 & 3 & 4 \\ 
\hline 
0 & 1 & -- & --  & -- & -- \\ 
1 & -- & 4 & 2  & --  & -- \\
2 & -- & -- & 4 & 4 & 1
\end{tabular}
}
\end{enumerate}
\end{multicols}
\end{minipage}
\end{center}
In particular, we have $\beta_i^S(R) \leq \binom{4}{i}$ for all $i$.
\end{thm}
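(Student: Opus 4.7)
The plan is to combine the projective dimension bound $\pd_S R \leq 4$ from Theorem \ref{projective:dimension:bound} with the Koszul-algebra restrictions from Section \ref{basic:properties:and:examples} and the multiplicity dichotomy $e(R) \leq 2$ of Theorem \ref{multiplicity:at:most:2}. Since $R$ is Koszul with $g = 4$ and $\hgt I = 2 \leq g - 2$, Corollary \ref{nonACI:Koszul:algebras:have:2:linear:syzygies} forces $\beta_{2,3}^S(R) \geq 2$, so Theorem \ref{projective:dimension:bound} applies and gives $\pd_S R \leq 4$. If $e(R) = 2$, Theorem \ref{multiplicity:2:case} classifies the possible forms of $I$, and Corollary \ref{multiplicity:2:case:Betti:tables} specialized to $g = 4$ immediately yields Betti tables (i) and (ii).

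It then remains to handle $e(R) = 1$. Here I would first collect the following Koszul constraints on the Betti table of $R$: Lemma \ref{Koszul:algebras:have:subdiagonal:Betti:table} confines the table to the subdiagonal region; Proposition \ref{Koszul:Betti:table:constraints}(c) gives $\beta_{3,6}^S(R) = \beta_{4,8}^S(R) = 0$; Corollary \ref{projective:dimension:level:linear:syzygy} gives $\beta_{4,5}^S(R) = 0$ (using $\hgt I \neq 1$); and Corollary \ref{regularity:bounded:by:projective:dimension} together with the fact that $R$ cannot be a complete intersection gives $\reg R \leq \pd_S R - 1 \leq 3$. After these cancellations, only the entries $\beta_{2,3}, \beta_{2,4}, \beta_{3,4}, \beta_{3,5}, \beta_{4,6}, \beta_{4,7}$ are possibly nonzero, and Proposition \ref{Koszul:Betti:table:constraints}(a) bounds $\beta_{2,3} \in [2,5]$ and $\beta_{3,4} \leq 4$. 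To pin down these entries, I would use the structural analysis from the lemmas preceding Theorem \ref{projective:dimension:bound}: since $I$ has at least two linear syzygies, Corollary \ref{multiplicity:1:with:no:linear:syzygies} forces $I$ to be represented by minors by a matrix in one of the forms (3)--(5) of Proposition \ref{representation:by:minors}, with further subcases coming from $\hgt(q_i,q_j) = 1$ for some pair (Lemmas \ref{twoht1} and \ref{ht1}) or from $(J:q_4)$ containing two independent linear forms for a suitable three-generator subideal $J$. In each subcase I would compute the minimal free resolution of $R$ as a mapping cone of the short exact sequence $0 \to S/(J:f)(-2) \to S/J \to R \to 0$ used throughout Section \ref{projective:dimension:and:multiplicity:bounds:section}, where $S/J$ and $S/(J:f)$ have resolutions of known shape (a Koszul complex twisted by a linear form, or the Hilbert--Burch resolution of a Northcott ideal from Theorem \ref{AKM}). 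These computations should identify the Betti table of $R$ as table (iii) when $\pd_S R = 3$ and as table (iv) when $\pd_S R = 4$.

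The main obstacle is this final structural step, not the projective dimension bound $\pd_S R \leq 4$. The divisibility of the Euler characteristic $N(t) = \sum_{i,j}(-1)^i \beta_{i,j}^S(R) t^j$ by $(1-t)^2$ contributes only two linear relations among the six undetermined entries above, leaving a large family of numerically admissible Betti tables in the $e(R) = 1$ regime that must be ruled out one by one. Closing this gap demands careful bookkeeping of the mapping-cone computations in each structural subcase, including verifying minimality of each cone and tracking any cancellations so that no extraneous Betti numbers arise.
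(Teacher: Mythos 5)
Your reduction to the case $e(R)=1$ (via Theorems \ref{multiplicity:at:most:2}, \ref{multiplicity:2:case} and Corollary \ref{multiplicity:2:case:Betti:tables}), the use of Corollary \ref{nonACI:Koszul:algebras:have:2:linear:syzygies} to invoke Theorem \ref{projective:dimension:bound}, and your list of forced vanishing entries (subdiagonality, Proposition \ref{Koszul:Betti:table:constraints}(c), Corollary \ref{projective:dimension:level:linear:syzygy}, $\reg R\leq 3$) all match the paper. But the decisive step is missing: you stop at the observation that divisibility of the numerator by $(1-t)^2$ gives only two relations among the six unknown entries, and you defer the rest to an unexecuted case-by-case mapping-cone analysis over the structural alternatives of Proposition \ref{representation:by:minors} and Lemmas \ref{twoht1}--\ref{ht1}. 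That plan is not carried out, so as written the argument does not determine tables (iii) and (iv); moreover it would be far more laborious than necessary, since the structural lemmas were designed only to prove the projective dimension bound, not to compute resolutions, and verifying minimality of the cones in each subcase is exactly the hard bookkeeping you acknowledge.

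The idea you are missing is that $e(R)=1$ together with $\hgt I=2$ gives a \emph{third} numerical condition: writing $H_R(t)=Q(t)/(1-t)^{\dim S}$, one has $Q(1)=Q'(1)=0$ and $\tfrac{1}{2}Q''(1)=e(R)=1$. These three linear equations, combined with $\beta_{2,3}^S(R)\geq 2$, nearly pin down the table. When $\pd_S R=3$ the paper first kills $\beta_{3,4}^S(R)$ by Proposition \ref{high:linear:syzygy:implies:nonzero:socle}: a nonzero entry there would force, after Artinian reduction, $I=(a_1x,a_2x,a_3x,q)$, which lies in the multiplicity-two family of Corollary \ref{multiplicity:2:case:Betti:tables}, a contradiction; the three equations then yield table (iii) uniquely. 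When $\pd_S R=4$ the equations reduce to $a+e+3f=3$, which with $a\geq 2$ forces $f=0$, $e=1$, $a=2$, and then $\beta_{3,4}^S(R)=0$ follows from the cited result \cite[4.2(b)]{Koszul:algebras:defined:by:3:quadrics}, giving table (iv). You also never exclude $\pd_S R=2$, which the paper rules out via Hilbert--Burch (a Cohen--Macaulay codimension-two ideal with four generators would be $I_3$ of a $4\times 3$ matrix, impossible for quadrics). So the proposal is a correct skeleton with the key closing mechanism absent.
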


\begin{proof}
By Theorem \ref{multiplicity:at:most:2}, we know that $e(R) \leq 2$, and if $e(R) = 2$, then the Betti table of $R$ is either (i) or (ii) by Theorem \ref{multiplicity:2:case} and Corollary \ref{multiplicity:2:case:Betti:tables}.  It remains to show that the Betti table of $R$ is either (iii) or (iv) if $e(R) = 1$.  In that case, we know that $2 = \hgt I \leq \pd_S R \leq 4$ by Theorem \ref{projective:dimension:bound}.  If $\pd_S R = \hgt I = 2$, then $R$ is Cohen-Macaulay so that $I = I_3(M)$ for a $4 \times 3$ matrix $M$ of homogeneous forms of positive degree \cite[1.4.17]{Bruns:Herzog}, which is clearly impossible for an ideal generated by quadrics. 

Suppose that $\pd_S R = 3$.  Since $R$ is not a complete intersection, it follows from Corollary \ref{regularity:bounded:by:projective:dimension} that $\reg R \leq 2$ so that the Betti table of $R$ has the form:
\begin{center}
\begin{tabular}{c|cccccccc}
  & 0 & 1 & 2 & 3  \\ 
\hline 
0 & 1 & -- & --  & -- \\ 
1 & -- & 4 & $a$  & $c$ \\
2 & -- & -- & $b$ & $d$ 
\end{tabular}
\end{center}
We show $c = 0$. If not, then after killing a maximal regular sequence of linear forms, we may assume that $S = \kk[a_1, a_2, a_3]$.  Then by Proposition \ref{high:linear:syzygy:implies:nonzero:socle}, there is a linear form $x \in S$ such that $a_ix \in I$ for all $i$ so that $I = (a_1x, a_2x, a_3x, q)$, but we have already seen in Corollary \ref{multiplicity:2:case:Betti:tables} that an ideal of this form has Betti table (i) or (ii), which is a contradiction.  Thus, $c=0$.

The Hilbert series of $R$ then has the form $H_R(t) = Q(t)/(1-t)^{\dim S}$ where $Q(t) = 1 -4t^2 +at^3 +bt^4 - dt^5$.  Since $\hgt I = 2$ and $e(R) = 1$, we know that $Q(1) = Q'(1) = 0$ and $\frac{Q''(1)}{2} = 1$ \cite[\S 4.1]{Bruns:Herzog}\footnote{However, the reader should consult the errata for Corollary 4.1.14.}, which yields the following system of equations
\begin{align*}
a + b -d &= 3 \\
3a  + 4b - 5d &= 8 \\
3a + 6b - 10d &= 5 
\end{align*}
whose only solution is easily checked to be $a = 3$, $b = 1$, and $d = 1$.  

Suppose now that $\pd_S R = 4$.  As above, we see that $\reg R \leq 3$ by Corollary \ref{regularity:bounded:by:projective:dimension} and that $\beta_{2,5}^S(R) = \beta_{3,6}^S(R) = \beta_{4,5}^S(R) = 0$ by Lemma \ref{Koszul:algebras:have:subdiagonal:Betti:table}, Proposition \ref{Koszul:Betti:table:constraints}, and Corollary \ref{projective:dimension:level:linear:syzygy} as $\hgt I = 2$ so that the Betti table of $R$ has the form:
\begin{center}
\begin{tabular}{c|cccccccc}
  & 0 & 1 & 2 & 3 & 4 \\ 
\hline 
0 & 1 & -- & --  & -- & -- \\ 
1 & -- & 4 & $a$  & $c$ & -- \\
2 & -- & -- & $b$ & $d$ & $e$ \\
3 & -- & -- & -- & -- & $f$
\end{tabular}
\end{center}
In this case, we have $Q(t) = 1 -4t^2 +at^3 +(b-c)t^4 - dt^5 + et^6 + ft^7$ so that the equalities $Q(1) = Q'(1) = 0$ and $\frac{Q''(1)}{2} = 1$ translate to the following system of equations:
\begin{align*}
a + b - c -d +e + f &= 3 \\
3a  + 4b -4c - 5d +6e + 7f &= 8 \\
3a + 6b -6c - 10d +15e + 21f &= 5 
\end{align*}
This system is easily checked to reduce to the equivalent system of equations:
\begin{align*}
a +e + 3f &= 3 \\
b -c -3e - 8f &= 1 \\
d -3e - 6f &= 1 
\end{align*}
Since $a, e, f$ are non-negative integers and $a \geq 2$ by Corollary \ref{nonACI:Koszul:algebras:have:2:linear:syzygies}, the first of the above equations forces $f = 0$ and $e \leq 1$, which must be an equality since $\pd_S R = 4$.  Hence, $a = 2$ so that $c = 0$ by part (b) of \cite[4.2]{Koszul:algebras:defined:by:3:quadrics}. The remaining values are easily computed, and we see that the Betti table of $R$ must be (iv).
\end{proof}

\begin{cor} \label{Betti:number:4:quadrics} 
Question \ref{Betti:number:bound:for:Koszul:algebras} has an affirmative answer for all Koszul algebras defined by $g = 4$ quadrics.
\end{cor}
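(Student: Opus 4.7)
The plan is to split into cases according to the multiplicity $e(R)$, which by Theorem \ref{multiplicity:at:most:2} can only be $1$ or $2$. If $e(R)=2$, the classification in Theorem \ref{multiplicity:2:case} together with the Betti tables computed in Corollary \ref{multiplicity:2:case:Betti:tables} (specialized to $g=4$) immediately gives tables (i) and (ii), so the real work lies in the case $e(R)=1$.

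Assume then $e(R)=1$. Since $R$ is Koszul and not an almost complete intersection, Corollary \ref{nonACI:Koszul:algebras:have:2:linear:syzygies} provides the two linear syzygies needed to apply Theorem \ref{projective:dimension:bound}, giving $\pd_S R\leq 4$. Thus $\pd_S R\in\{2,3,4\}$. The value $\pd_S R=2$ is ruled out immediately: then $R$ would be Cohen-Macaulay of codimension two, so by Hilbert-Burch $I$ would be the maximal minors of a $4\times 3$ matrix of forms of positive degree, which cannot yield four quadrics.

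Next I would handle $\pd_S R=3$. Corollary \ref{regularity:bounded:by:projective:dimension} forces $\reg R\leq 2$, so the Betti table has unknowns only in positions $(2,3),(2,4),(3,4),(3,5)$. The key step is showing that the $(3,4)$-entry vanishes: otherwise, after quotienting by a maximal regular sequence of linear forms we may assume $S=\kk[a_1,a_2,a_3]$, and Proposition \ref{high:linear:syzygy:implies:nonzero:socle} produces a linear form $x$ with $a_ix\in I$ for all $i$. Then $I=(a_1x,a_2x,a_3x,q)$, but any such ideal has a Betti table of the form described in Corollary \ref{multiplicity:2:case:Betti:tables}, which has $e(R)=2$, a contradiction. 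With the $(3,4)$-entry zero, the three equations from $Q(1)=Q'(1)=0$ and $Q''(1)/2=e(R)=1$ form a determined linear system whose unique solution is table (iii).

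Finally, for $\pd_S R=4$ I would combine several constraints to write the Betti table with six unknown entries: the subdiagonal vanishing (Lemma \ref{Koszul:algebras:have:subdiagonal:Betti:table}) kills the $(2,5)$ and $(3,6)$ entries, the bound $\reg R\leq 3$ (Corollary \ref{regularity:bounded:by:projective:dimension}, since $R$ is not a CI) kills higher rows, and Corollary \ref{projective:dimension:level:linear:syzygy} applied to $\beta_{4,5}^S(R)$ kills that entry because $\hgt I=2\neq 4$. The three Hilbert series relations then reduce to a system in the remaining six unknowns, where the constraint $a\geq 2$ from Corollary \ref{nonACI:Koszul:algebras:have:2:linear:syzygies} combined with the nonnegativity of the tail entries pins down $f=0$, $e=1$, and $a=2$. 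Once $a=2$, part (b) of Proposition \ref{Koszul:Betti:table:constraints} (or rather its consequence from \cite{Koszul:algebras:defined:by:3:quadrics}) forces the $(3,4)$-entry to vanish, and the remaining values are determined, producing table (iv). The main obstacle I anticipate is the $\pd_S R=4$ bookkeeping: one must carefully assemble the vanishing statements from Lemma \ref{Koszul:algebras:have:subdiagonal:Betti:table}, Proposition \ref{Koszul:Betti:table:constraints}, Corollary \ref{projective:dimension:level:linear:syzygy}, and the $\beta_{2,3}\geq 2$ bound before the Hilbert-series arithmetic cuts down the remaining freedom to a single solution.
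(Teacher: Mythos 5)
There is a genuine gap: your argument only treats the case $\hgt I = 2$, whereas Corollary \ref{Betti:number:4:quadrics} concerns \emph{all} Koszul algebras defined by four quadrics, so the heights $1$, $3$, and $4$ must be handled as well. Your very first step, the split $e(R)\in\{1,2\}$ via Theorem \ref{multiplicity:at:most:2}, is only valid when $\hgt I = 2$ (a complete intersection of four quadrics has $e(R)=16$), and Theorem \ref{projective:dimension:bound}, which you invoke next, is likewise a height-two statement; nothing in your outline addresses the other heights. These are not all formalities: height $4$ is a complete intersection (Koszul complex, so $\beta_i^S(R)=\binom{4}{i}$), and height $1$ forces $I = zJ$ with $z$ a linear form and $J$ a complete intersection of four linear forms, so the resolution is again essentially a Koszul complex; but height $3$ makes $I$ an almost complete intersection, and that case genuinely requires the external result of \cite{Koszul:ACI's}. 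The paper's proof of this corollary is precisely that assembly over the four heights, quoting Theorem \ref{Koszul:4:quadric:height:2:Betti:tables} for height two; what you have written is instead a sketch of the proof of that theorem.

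Within the height-two case your outline does follow the paper's argument for Theorem \ref{Koszul:4:quadric:height:2:Betti:tables} essentially step for step (multiplicity split, exclusion of $\pd_S R = 2$ via Hilbert--Burch, the socle argument killing $\beta_{3,4}^S(R)$ when $\pd_S R = 3$, and the Hilbert-series bookkeeping when $\pd_S R = 4$), with two small inaccuracies worth fixing. First, $\beta_{3,6}^S(R)=0$ is not a consequence of Lemma \ref{Koszul:algebras:have:subdiagonal:Betti:table} (there $j = 2i$, not $j > 2i$); it follows from Proposition \ref{Koszul:Betti:table:constraints}(c) because $3 > \hgt I = 2$. Second, in the $\pd_S R = 4$ case, nonnegativity and $a \ge 2$ alone leave the solution $(a,e,f) = (3,0,0)$ of $a + e + 3f = 3$; you must also use $\pd_S R = 4$, i.e.\ $\beta_4^S(R) = e + f \neq 0$ (after $\beta_{4,5}^S(R)=0$ is known), to force $e = 1$ and hence $a = 2$.
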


\begin{proof}
It is immediate that the Betti number bound for a Koszul algebra $R = S/I$ holds when $I$ is a complete intersection ($\hgt I = 4$), when $\hgt I = 1$ by Proposition \ref{Koszul:Betti:table:constraints}, and when $I$ is an almost complete intersection ($\hgt I = 3$) by \cite{Koszul:ACI's}.  Combining this with the preceding theorem on the possible Betti tables of Koszul algebras when $\hgt I = 2$ completes the proof.
\end{proof}

\end{spacing}


\begin{thebibliography}{{\c C}WW95}

\bibitem[ACI10]{free:resolutions:over:Koszul:algebras}
L.~Avramov, A.~Conca, and S.~Iyengar.
\newblock Free resolutions over commutative Koszul algebras.
\newblock {\em Math. Res. Lett.} 17 (2010), no. 2, 197--210. 

\bibitem[ACI15]{subadditivity:of:Betti:numbers}
L.~Avramov, A.~Conca, and S.~Iyengar.
\newblock Subadditivity of syzygies of Koszul algebras.
\newblock {\em Math. Ann.} 361 (2015), no. 1-2, 511--534. 

\bibitem[AKM88]{Poincaré:series:over:rings:of:small:embedding:codepth}
L.~Avramov, A.~Kustin, and M.~Miller.
\newblock Poincar\'e series of modules over local rings of small embedding codepth or small linking number.
\newblock {\em J. Algebra} 118 (1988), no. 1, 162--204. 

\bibitem[Bac86]{high:Veronese:subrings:are:Koszul}
J.~Backelin.
\newblock On the rates of growth of the homologies of Veronese subrings. 
\newblock {\em Algebra, algebraic topology and their interactions (Stockholm, 1983)}, 79--100, 
\newblock Lecture Notes in Math., 1183, Springer, Berlin, 1986.

\bibitem[Bac88]{Backelin}
J.~Backelin.
\newblock Relations between rates of growth of homologies. 
\newblock {\em Research Reports in Mathematics} 25, Department of Mathematics (1988), Stockholm University.

\bibitem[BHI17]{Koszul:algebras:defined:by:3:quadrics}
A.~Boocher, S.~H.~Hassanzadeh, and S.~Iyengar.
\newblock Koszul algebras defined by three relations. 
\newblock {\em Homological and computational methods in commutative algebra}, 53--68, 
\newblock Springer INdAM Ser., 20, Springer, Cham, 2017. 

\bibitem[BC03]{upper:semicontinuity}
W.~Bruns and A.~Conca.
\newblock Gr\"obner bases and determinantal ideals. 
\newblock {\em Commutative algebra, singularities and computer algebra (Sinaia, 2002)}, 9--66, 
\newblock NATO Sci. Ser. II Math. Phys. Chem., 115, Kluwer Acad. Publ., Dordrecht, 2003.

\bibitem[BH93]{Bruns:Herzog}
W.~Bruns and J.~Herzog.
\newblock {\em Cohen-Macaulay rings.}  
\newblock Cambridge Studies in Advanced Mathematics, 39. 
\newblock Cambridge University Press, Cambridge, 1993.

\bibitem[Con00]{spaces:of:quadrics:of:low:codimension}
A.~Conca.
\newblock Gr\"obner bases for spaces of quadrics of low codimension.
\newblock {\em Adv. in Appl. Math.} 24 (2000), no. 2, 111--124. 

\bibitem[Con14]{Koszul:algebras:and:their:syzygies} 
A.~Conca.
\newblock Koszul algebras and their syzygies. 
\newblock {\em Combinatorial algebraic geometry}, 1--31, 
\newblock Lecture Notes in Math., 2108, Fond. CIME/CIME Found. Subser., Springer, Cham, 2014. 

\bibitem[CDR13]{Koszul:algebras:and:regularity}
A.~Conca, E.~De Negri, and M.E.~Rossi.
\newblock Koszul algebras and regularity. 
\newblock {\em Commutative algebra}, 285--315, Springer, New York, 2013. 

\bibitem[CLO15]{ideals:varieties:and:algorithms}
D.~Cox, J.~Little, and D.~O'Shea.
\newblock {\em Ideals, varieties, and algorithms.} 
\newblock An introduction to computational algebraic geometry and commutative algebra. Fourth edition. \newblock Undergraduate Texts in Mathematics. Springer, Cham, 2015.

\bibitem[Eis95]{Eisenbud}
D.~Eisenbud.
\newblock {\em Commutative algebra. With a view toward algebraic geometry.} 
\newblock Graduate Texts in Mathematics, 150. 
\newblock Springer-Verlag, New York, 1995.

\bibitem[Eis05]{geometry:of:syzygies}
D.~Eisenbud.
\newblock {\em The geometry of syzygies.} 
\newblock A second course in commutative algebra and algebraic geometry. 
\newblock Graduate Texts in Mathematics, 229. 
\newblock Springer-Verlag, New York, 2005.

\bibitem[EH12]{Herzog:Ene}
V.~Ene and J.~Herzog.
\newblock {\em Gr\"obner bases in commutative algebra.} 
\newblock Graduate Studies in Mathematics, 130. 
\newblock American Mathematical Society, Providence, RI, 2012.

\bibitem[Eng07]{Engheta}
B.~Engheta.
\newblock On the projective dimension and the unmixed part of three cubics.
\newblock {\em J. Algebra} 316 (2007), no. 2, 715--734. 

\bibitem[Fr\"o75]{quadratic:monomial:ideals:are:Koszul}
R.~Fr\"oberg.
\newblock Determination of a class of Poincar\'e series. 
\newblock {\em Math. Scand.} 37 (1975), no. 1, 29--39. 

\bibitem[Fr\"o99]{Fröberg:Koszul:algebras:survey}
R.~Fr\"oberg.
\newblock Koszul algebras. 
\newblock {\em Advances in commutative ring theory (Fez, 1997)}, 337--350, 
\newblock Lecture Notes in Pure and Appl. Math., 205, Dekker, New York, 1999. 

\bibitem[M2]{Macaulay2}
D.~Grayson and M.~Stillman.
\newblock Macaulay2, a software system for research in algebraic geometry. 
\newblock Available at \url{http://www.math.uiuc.edu/Macaulay2/}.

\bibitem[Har92]{Harris:algebraic:geometry}
J.~Harris.
\newblock {\em Algebraic geometry. A first course.}
\newblock Graduate Texts in Mathematics, 133. 
\newblock Springer-Verlag, New York, 1992.

\bibitem[HHR00]{strongly:Koszul:algebras}
J.~Herzog, T.~Hibi, and G.~Restuccia.
\newblock Strongly Koszul algebras. 
\newblock {\em Math. Scand.} 86 (2000), no. 2, 161--178. 

\bibitem[Hib87]{Hibi:rings}
T.~Hibi.
\newblock Distributive lattices, affine semigroup rings and algebras with straightening laws. 
\newblock {\em Commutative algebra and combinatorics (Kyoto, 1985)}, 93--109, 
\newblock Adv. Stud. Pure Math., 11, North-Holland, Amsterdam, 1987. 

\bibitem[HM+13]{projective:dimension:of:height:2:ideals:of:quadrics}
C.~Huneke, P.~Mantero, J.~McCullough, and A.~Seceleanu.
\newblock The projective dimension of codimension two algebras presented by quadrics. 
\newblock {\em J. Algebra} 393 (2013), 170--186. 

\bibitem[HM+18]{projective:dimension:of:4:quadrics}
C.~Huneke, P.~Mantero, J.~McCullough, and A.~Seceleanu. 
\newblock A tight bound on the projective dimension of four quadrics. 
\newblock {\em J. Pure Appl. Algebra} 222 (2018), no. 9, 2524--2551. 

\bibitem[HM+07]{minimal:homogeneous:linkage}
C.~Huneke, J.~Migliore, U.~Nagel, and B.~Ulrich.
\newblock Minimal homogeneous liaison and licci ideals. {\em Algebra, geometry and their interactions}, 129--139, 
\newblock Contemp. Math., 448, Amer. Math. Soc., Providence, RI, 2007. 

\bibitem[Kem90]{Kempf}
G.~Kempf.
\newblock Some wonderful rings in algebraic geometry. 
\newblock {\em J. Algebra} 134 (1990), no. 1, 222--224.

\bibitem[Koh99]{high:linear:syzygies}
J.~Koh.
\newblock Remarks on high linear syzygy. 
\newblock Free resolutions of coordinate rings of projective varieties and related topics (Kyoto, 1998). 
\newblock {\em S\={u}rikaisekikenky\={u}sho K\={o}ky\={u}roku} No. 1078 (1999), 40--47.

\bibitem[MM19]{structure:of:4:quadric:Koszul:algebras}
P.~Mantero and M.~Mastroeni.
\newblock The structure of Koszul algebras defined by four quadrics.
\newblock Preprint (2019).

\bibitem[Mas18]{Koszul:ACI's}
M.~Mastroeni.
\newblock Koszul almost complete intersections.
\newblock {\em J. Algebra} 501 (2018), 285--302. 

\bibitem[Nag62]{Nagata}
M.~Nagata.
\newblock {\em Local rings.} 
\newblock Interscience Tracts in Pure and Applied Mathematics, No. 13 
\newblock Interscience Publishers a division of John Wiley \& Sons, New York-London, 1962.

\bibitem[OT99]{bipartite:edge:rings}
H.~Ohsugi and T.~Hibi.
\newblock Koszul bipartite graphs.
\newblock {\em Adv. in Appl. Math.} 22 (1999), no. 1, 25--28. 

\bibitem[Pol95]{canonical:rings:of:curves}
A.~Polishchuk.
\newblock On the Koszul property of the homogeneous coordinate ring of a curve. 
\newblock {\em J. Algebra} 178 (1995), no. 1, 122--135.

\bibitem[Ser00]{Serre:local:algebra}
J.-P.~Serre.
\newblock {\em Local algebra.}
\newblock Translated from the French by CheeWhye Chin and revised by the author.  Springer Monographs in Mathematics. 
\newblock Springer-Verlag, Berlin, 2000.

\bibitem[Wei94]{Weibel}
C.~Weibel.
\newblock {\em An introduction to homological algebra.}
\newblock Cambridge Studies in Advanced Mathematics, 38. 
\newblock Cambridge University Press, Cambridge, 1994.


\end{thebibliography}
\end{document}